\documentclass[11pt]{article}
\usepackage{amsmath,amssymb,amsthm}
\usepackage[margin=1in]{geometry}
\usepackage[hidelinks]{hyperref}
\newtheorem{theorem}{Theorem}
\newtheorem{lemma}{Lemma}
\newcommand{\ex}{\operatorname{ex}}
\title{On the Tur\'an Number of the Cartesian Product of Two Stars}
\author{
    Quan Sun\thanks{School of Mathematical Sciences, East China Normal University, 500 Dongchuan Road, Shanghai 200240, P.R.  China.
        Email: 52280155016@stu.ecnu.edu.cn
    }
    \qquad
    Long-Tu Yuan\thanks{School of Mathematical Sciences and Shanghai Key Laboratory of PMMP, East China Normal University, 500 Dongchuan Road, Shanghai 200240, P.R.  China.
        Email: ltyuan@math.ecnu.edu.cn.
        Supported in part by National Natural Science Foundation of China grant 12271169 and Science and Technology Commission
        of Shanghai Municipality (No. 22DZ2229014).
    }
}
\date{}
\begin{document}
\maketitle

\begin{abstract}
We study the Tur\'an number of the Cartesian product of two stars,
a family whose order of magnitude is known. For most pairs of star
sizes, we determine the leading coefficient exactly. In the remaining
congruence class, we give explicit lower and upper bounds for the
coefficient at the leading scale. Our lower bounds use blowups of
polarity graphs, and our upper bounds use structural estimates on
neighbourhoods.
\end{abstract}

\section{Introduction}

For a graph $H$, the \emph{Tur\'an number} $\ex(n,H)$ is the maximum number
of edges in a graph $G$ on $n$ vertices that does not contain $H$ as a subgraph.

For a bipartite graph $H$ containing a cycle, determining an exponent
$\alpha$ for which $\ex(n,H)=\Theta(n^\alpha)$ and proving a formula
$\ex(n,H)=(c_H+o(1))n^\alpha$ with an explicit $c_H>0$ are
distinct problems. The latter is an important and often substantially
harder refinement; see \cite[Remark~3.3]{FurediSimonovits}.
An estimate of the order of magnitude determines the growth rate but allows
a multiplicative gap between lower and upper bounds. The leading constant
specifies the limiting value of $\ex(n,H)/n^\alpha$ and, to first order,
the number of edges that forces a copy of $H$. It also distinguishes
constructions whose edge counts have the same exponent.

The difficulty is that a construction and a universal upper bound
must agree to first order. Estimates sufficient to establish
$\Theta(n^\alpha)$ need not retain this precision, and even the
existence of a limit for $\ex(n,H)/n^\alpha$ is a separate question.
Exact leading constants are known for comparatively few families.
A classical example is $C_4$,
with coefficient $1/2$ at exponent $3/2$ \cite{ERS,KST}.
For each fixed integer $t\geq2$, F\"uredi \cite{FurediK2t} proved
$\ex(n,K_{2,t})=(\sqrt{t-1}/2+o(1))n^{3/2}$.
Brown constructed graphs that do not contain $K_{3,3}$ as a subgraph
and have $(1/2-o(1))n^{5/3}$
edges, and F\"uredi proved a matching upper bound, giving
$\ex(n,K_{3,3})=(1/2+o(1))n^{5/3}$
\cite{BrownThomsen,FurediZarankiewicz}. The hexagon
illustrates the additional difficulty of the coefficient problem:
at the known scale $n^{4/3}$, F\"uredi, Naor and Verstra\"ete
\cite{FurediNaorVerstraete} disproved the conjectured coefficient
$1/2$ and obtained more precise, but nonmatching, coefficient bounds.

We study the Cartesian product of two stars. For graphs $G$ and $H$,
the \emph{Cartesian product} $G\square H$ has vertex set $V(G)\times V(H)$;
two vertices $(u,v)$ and $(u',v')$ are adjacent if $u=u'$ and
$vv'\in E(H)$, or if $v=v'$ and $uu'\in E(G)$. If both factors are
bipartite, their product is bipartite: choose a proper colouring of
each factor with colours $0,1$, and colour $(u,v)$ by the sum of
the two colours modulo $2$. Brada\v{c}, Janzer, Sudakov
and Tomon \cite{Grid} determined the order of magnitude for the
Cartesian product of a tree and a path, each with at least one edge,
including grids. They also observed in their concluding remarks
that their argument extends to the product of two trees, each with
at least one edge,
when one factor is a star. Thus the order $\Theta(n^{3/2})$ for
the products studied here was already known. Gao, Janzer, Liu and
Xu \cite{GeometricShapes} improved the coefficient in the $n^{3/2}$ upper bound for
grids and determined the order of magnitude for the product of an
even cycle of length at least eight and an edge. Wang and Yang
\cite{TreeProduct} determined the order of magnitude for the product
of any two trees with at least one edge each. Zhao, Cheng, Chi,
Gy\H{o}ri, Tompkins and Wang \cite{ZhaoEtAl} obtained upper and lower
bounds at the $n^{3/2}$ scale for the product of a star and an edge.
For the product of two stars, the known exponent therefore leaves
the coefficient problem to be addressed. We determine the exact
leading constant outside one congruence class and give explicit
coefficient bounds within that class.
Our upper bound proofs draw on methods of Gao et al.\ \cite{GeometricShapes}
and Zhao et al.\ \cite{ZhaoEtAl}; the specific connections are described
in Section~3.

We write $S_t=K_{1,t}$ for the star with $t$ leaves. The star sizes
$a,b$ are fixed while the host order $n$ tends to infinity.
Throughout the paper, write $D=\lfloor a(a+b)/(a+1)\rfloor$ and
$F_{a,b}=S_a\square S_b$. The following are our main results.

\begin{theorem}\label{thm:general}
Fix integers $2\leq a\leq b$ with $b\not\equiv2\pmod{a+1}$. Then
\[
 \ex(n,F_{a,b})=\left(\frac{\sqrt D}{2}+o(1)\right)n^{3/2}.
\]
\end{theorem}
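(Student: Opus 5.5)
The plan is to prove matching bounds at the scale $n^{3/2}$. The lower bound will come from a blowup of an Erd\H{o}s--R\'enyi polarity graph, and the upper bound from a local analysis of neighbourhoods modelled on F\"uredi's argument for $K_{2,t}$.

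\textbf{Reformulating $F_{a,b}$.} Write $c$ for the centre of $S_a$, $x_1,\dots,x_a$ for its leaves, $o$ for the centre of $S_b$, and $y_1,\dots,y_b$ for its leaves. Put $v=(c,o)$. The neighbourhood of $v$ in $F_{a,b}$ consists of the $a$ vertices $(x_i,o)$ and the $b$ vertices $(c,y_j)$. Every pair $(x_i,o),(c,y_j)$ has a further private common neighbour $(x_i,y_j)$, and these $ab$ vertices are pairwise distinct. Beyond this, each $(x_i,y_j)$ is adjacent only to $(x_i,o)$ and $(c,y_j)$.

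So a host graph $G$ contains $F_{a,b}$ once some vertex $v$ has $a+b$ neighbours that split into parts $A,B$ of sizes $a,b$ such that the pairs in $A\times B$ admit a system of distinct common neighbours avoiding $v$ and $N(v)$. Define the \emph{link graph} $L_v$ on $N(v)$ by joining two neighbours whenever they have a common neighbour other than $v$, weighting each edge by that codegree minus one. Finding $F_{a,b}$ then amounts to finding a ``$K_{a,b}$ with distinct representatives'' in $L_v$. When the codegrees are large this is easy by greedy selection, so the difficulty is concentrated on pairs of bounded codegree.

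\textbf{Upper bound.} Take $G$ with $e(G)\ge(\sqrt D/2+\varepsilon)n^{3/2}$. By standard cleaning (passing to a subgraph of almost-regular degree $\approx d=\Theta(\sqrt n)$ and deleting a negligible set of high-codegree pairs), the number of paths of length two is about $n d^2/2$. Since $d^2/n\approx D+\Theta(\varepsilon)$, double counting shows that $\sum_{\{y,z\}}(\mathrm{codeg}(y,z)-1)$ exceeds $D\binom n2(1+\varepsilon')$. Averaging over $v$ then gives a vertex whose link graph $L_v$ has average weighted degree greater than $D$.

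The key local lemma I would aim for is this: if a weighted graph on $N(v)$ has average degree exceeding $D$, with all weights bounded and other degenerate configurations excluded, then it contains the required $K_{a,b}$ with distinct representatives. The threshold $a(a+b)/(a+1)$ should arise as a fractional Tur\'an-type density. In any set of $a+b$ vertices with no admissible split, the total weight is at most about $a(a+b)/2\cdot\frac{2}{a+1}\cdot\ldots$, and optimising gives exactly $D$ after taking the floor. I would prove the lemma by choosing a vertex $y$ of maximum link degree, looking inside its link-neighbourhood, and using Hall's theorem to produce distinct representatives. This local extremal problem is the main obstacle. The floor in $D$ and the excluded class $b\equiv2\pmod{a+1}$ indicate that an extremal local configuration, namely a disjoint union of cliques of size $a+1$ in the link, is tight only when $a+1$ divides the relevant quantity. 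Elsewhere the integrality slack allows the bound $D$ to be reached.

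\textbf{Lower bound.} Start from an Erd\H{o}s--R\'enyi-type graph in which every pair has codegree at most $1$. Blow it up, or equivalently use F\"uredi's construction over $\mathbb F_q$ with a subgroup of order $D$, so that every pair of vertices has at most $D$ common neighbours and the link of every vertex is a disjoint union of small cliques. Such a graph has $(\sqrt D/2-o(1))n^{3/2}$ edges. I then need to check that the link of every vertex is too sparse to host the bipartite structure above. This means showing that $a+b$ vertices partitioned into cliques of the blowup cannot have all $ab$ cross pairs joined with distinct representatives, which is again a counting argument on cliques of size $a+1$. This verification is where the hypothesis $b\not\equiv2\pmod{a+1}$ should be used to make the two bounds coincide.
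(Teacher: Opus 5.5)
\textbf{Verdict: there is a genuine gap in your upper bound.} The local lemma you aim for is false, and no statement about a single link $L_v$ of this kind can be true.

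In the extremal graph itself (the $D$-fold blowup of $\mathcal P_\ell$), two neighbours of $v$ in distinct fibres have exactly the $D$ vertices of $v$'s fibre as common neighbours. So $L_v$ is complete, with weight $D-1$ on every such pair, and its average weighted degree is of order $\sqrt n$, far above $D$, yet $F_{a,b}$ is absent.

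Normalising does not help either. Blow up a single point $p$ of $\mathcal P_\ell$ to a fibre of size $X=a+b-1$ and keep every other fibre a singleton. For $v$ in this fibre, every pair in $N(v)$ has codegree $X>D$. Nevertheless the graph is $F_{a,b}$-free. Projecting each $4$-cycle $ox_iz_{ij}y_j$ to the $C_4$-free base forces $\pi(o)=\pi(z_{ij})$ or $\pi(x_i)=\pi(y_j)$. If $o$ lies in the big fibre, its neighbours lie in singletons, so all $ab+1>X$ vertices $o,z_{ij}$ would have to share that fibre. Otherwise all $a+b>X$ row and column vertices would have to share one fibre.

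So a vertex whose link is as dense as you like need not lie in any copy of $F_{a,b}$. The obstruction is that the representatives of \emph{different} pairs in $N(v)$ coincide, and pairwise weights with Hall-type selection cannot see this. Averaging down to one good vertex also throws away the fact that makes such vertices costly globally: their neighbours must then sit in small fibres.

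What is needed is a two-sided, global accounting, which is what the paper supplies.
\begin{itemize}
\item Lemma~\ref{lem:counting}(i) makes pairs of codegree above $X$ negligible.
\item Lemma~\ref{lem:profile-partitions} partitions each neighbourhood into independent ``fibres'' $\Pi_u$ of size at most $X$ with $\sum_v\sum_{u,w\in N(v)}|\Pi_u(v)\cap\Pi_w(v)|^{-1}\le(1+\varepsilon)n^2$. This is a reciprocal-weighted form of $\sum_v\sum_{u\ne w}1/q(u,w)\le n^2$.
\item Lemma~\ref{lem:two-cores} shows, via $g(s)$, that a part of size $s$ in $\Pi_v$ forces the parts $\Pi_z(v)$ containing $v$, for $z$ in that part, to be small on reciprocal average.
\item Cauchy--Schwarz over both orientations of each edge then reduces everything to $\min_s\bigl(s^{-1/2}+\sqrt{g(s)}\bigr)$.
\end{itemize}
The constant $D$ arises as the balance point $s=a(a+b-s)$ of this trade-off, not as a clique density inside one link. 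The congruence hypothesis enters only here: $s+1/g(s)\le 2D$ holds for all $s$ exactly when $b\not\equiv2\pmod{a+1}$.

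On the lower-bound side, the uniform $D$-blowup is $F_{a,b}$-free for every $2\le a\le b$. This follows from the projection dichotomy above plus a count on the auxiliary bipartite graph of pairs with $\pi(x_i)=\pi(y_j)$, so the hypothesis is not used there, contrary to your expectation. Three smaller points:
\begin{itemize}
\item The links of the blowup are not disjoint unions of small cliques.
\item F\"uredi's $K_{2,D+1}$-free graph is not equivalent to the blowup. $F_{a,b}$ has maximum codegree $2$, so bounding codegrees alone does not exclude it. What the blowup provides is that all pairs in $N(v)$ from distinct fibres share the \emph{same} $D-1$ extra common neighbours.
\item Your count is off by one. From $d^2/n\approx D+\Theta(\varepsilon)$ you get $\sum\mathrm{codeg}\approx(D+\Theta(\varepsilon))\binom n2$, so subtracting one per pair cannot exceed $D\binom n2$.
\end{itemize}
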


\begin{theorem}\label{thm:exceptional}
Fix integers $2\leq a<b$ with $b\equiv2\pmod{a+1}$. Then
\[
 \left(\frac{\sqrt{D+1/10}}2-o(1)\right)n^{3/2}
 \leq\ex(n,F_{a,b})
 \leq\left(\frac{\sqrt{D+1/2}}2+o(1)\right)n^{3/2}.
\]
\end{theorem}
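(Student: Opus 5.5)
We will prove the two inequalities of Theorem~\ref{thm:exceptional} separately. Both follow the pattern behind Theorem~\ref{thm:general}. The upper bound uses the same neighbourhood-structure argument, run with one unit of slack. The lower bound uses a blowup of a polarity graph, now with a perturbation that exploits the congruence $b\equiv 2\pmod{a+1}$.

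\emph{Upper bound.} Let $G$ be $F_{a,b}$-free with $n$ vertices and average degree $d$. Delete low-degree vertices in the standard way, so that we may assume minimum degree $\Omega(d)$.

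The first step is to bound codegrees. Embed $F_{a,b}$ with centre $(u_0,v_0)$, row $\{(u_0,v_j)\}$, column $\{(u_i,v_0)\}$, and the grid vertices $(u_i,v_j)$. Each grid vertex is a common neighbour of $(u_i,v_0)$ and $(u_0,v_j)$. So a copy exists as soon as we find a vertex $x$ and sets $A,B\subseteq N(x)$ with $|A|=a$ and $|B|=b$ such that every pair in $A\times B$ has a private common neighbour outside $N[x]$. A greedy and Hall-type argument on the auxiliary graph on $N(x)$ shows the following: if $F$ is forbidden, the total excess $\sum_{y\ne x}\binom{|N(x)\cap N(y)|}{2}$ counted over $N(x)$ can only be of order $D\binom{d(x)}{2}/\ldots$.

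More precisely, the aim is a local inequality: for every vertex $x$, the number of paths $x'-y-x''$ with $x',x''\in N(x)$ and $y\ne x$ is at most $(D+\tfrac12+o(1))\,d(x)$. The term $D=\lfloor a(a+b)/(a+1)\rfloor$ comes from the extremal degree in the auxiliary graph on $N(x)$ that avoids the required $a\times b$ "bipartite-like" configuration. In this congruence class the fractional part $a(a+b)/(a+1)-D$ equals $a/(a+1)$, which does not vanish. This is why only $D+\tfrac12$ can be guaranteed here, rather than $D$. Summing the local inequality over $x$ and applying Jensen to $\sum_y \binom{d(y)}{2}\approx n\binom{d}{2}$ gives $d^2\lesssim (D+\tfrac12)n$. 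Hence $e(G)\le(\sqrt{D+1/2}/2+o(1))n^{3/2}$.

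\emph{Lower bound.} Take the Erd\H{o}s--R\'enyi polarity graph $ER_q$ and replace each vertex by a suitable structure of $s$ vertices, such as a blowup or twisted product, so that the resulting graph has degree about $\sqrt{D'n}$. The construction is chosen so that every path of length two is covered at most the number of times that $F_{a,b}$ tolerates. For $D'=D$ this is the construction underlying Theorem~\ref{thm:general}.

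To reach $D+1/10$, we will mix two blowup patterns on a $1/10$-fraction of the vertex classes, or equivalently use a weighted inner graph. This pushes the average number of common neighbours slightly above $D$ while every local neighbourhood still has fewer than the $b$ pairs required for the grid. Freeness then follows because any copy of $F_{a,b}$ projects to a configuration in $ER_q$ whose $C_4$-freeness forces the grid vertices into few classes. A counting check then rules the copy out.

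\emph{Main obstacle.} The hardest step is the upper bound's local lemma. We have to show that the auxiliary graph on $N(x)$ has at most $(D+\tfrac12)|N(x)|$ edges, and for this we need an exact extremal result for the forbidden "$a$ vertices jointly adjacent to $b$ others" pattern. We will try a discharging argument. In it, the vertices of degree above $D$ must have neighbours of degree at most $D-1$, and the congruence condition prevents perfect $D$-regularity with the correct divisibility. Half a unit of average excess is the best this crude pairing achieves.
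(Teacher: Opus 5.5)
\emph{Upper bound.} The local inequality you aim for is false as stated. In the extremal blowup, every $y\ne x$ in the fibre of $x$ has $N(y)=N(x)$, so the number of paths $x'-y-x''$ with $x',x''\in N(x)$ is $\Theta(d(x)^2)$, not $O(d(x))$. The natural global replacement would be $\sum_{\{u,w\}}q(u,w)\le(D+\frac12+o(1))\binom n2$ followed by Jensen, but this cannot come from a per-vertex count either. Codegrees up to $a+b-1>D$ are compatible with $F_{a,b}$-freeness: $K_{a+b-1,n-a-b+1}$ is $F_{a,b}$-free, meets your minimum-degree normalisation, and has average codegree close to $a+b-1$. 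Plain codegree counting is exactly what gives Lemma~\ref{lem:counting}(i), which yields only $\sqrt{a+b-1}/2$. The ``exact extremal result'' and the discharging you postpone are the whole difficulty, and nothing in the proposal produces $D$.

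In the paper, $D$ comes from two ingredients that have no analogue in your plan. The first is the reciprocal-weighted count $\sum_v\sum_{u,w\in N(v)}|\Pi_u(v)\cap\Pi_w(v)|^{-1}\le(1+\varepsilon)n^2$, in which each $N(u)$ is partitioned into independent cores of size at most $a+b-1$ with large common neighbourhoods (Lemma~\ref{lem:profile-partitions}). The second is the reciprocal-degree inequality between two such cores (Lemma~\ref{lem:two-cores}), which produces $h(s)=\min\{a(a+b-s),\,a+b-\lceil s/a\rceil\}$. Cauchy--Schwarz on $Z=\sum_vd(v)\sqrt{I_v}$ then gives the constant $1/(2M)$, where $2M=\min_s(s^{-1/2}+h(s)^{-1/2})$.

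Your explanation of the extra $\frac12$ is also wrong. The number $a(a+b)/(a+1)$ has nonzero fractional part whenever $a+b\equiv\lambda\pmod{a+1}$ with $2\le\lambda\le a$, and there Theorem~\ref{thm:general} gives exactly $\sqrt D/2$. What singles out $b\equiv2\pmod{a+1}$ is that $s+h(s)$ reaches $2D+1$, at $s=D$ (where $h=D+1$) and at $s=D+1$ (where $h=D$). Strict convexity, $D^{-1/2}+(D+1)^{-1/2}>2(D+\frac12)^{-1/2}$, then gives the bound $\sqrt{D+1/2}/2$.

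\emph{Lower bound.} The decisive idea is missing. Suppose, as your description suggests, that some fibres are enlarged to size $D+1$. Then two adjacent enlarged fibres with a complete join already contain $F_{a,b}$. Write $b=(a+1)t+2$, so $D=a(t+1)$, and place the vertices as follows:
\begin{itemize}
\item $o$ and the $D$ cells $z_{ij}$ with $j>at+1$ go in one fibre $W_p$;
\item all rows and the columns $y_1,\dots,y_{at+1}$ go in an adjacent fibre $W_q$;
\item the remaining $t+1$ columns go in fibres over other neighbours of $p$;
\item the remaining cells go in fibres over other neighbours of $q$.
\end{itemize}
A positive fraction of enlarged fibres always contains adjacent pairs. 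Hence no ``counting check'' after projecting to the $C_4$-free polarity graph can exclude this copy.

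The paper deletes a perfect matching between any two adjacent enlarged fibres. Its projection-and-counting argument shows that the configuration above is the only survivor. In that configuration $W_{\pi(o)}$ and $W_{\pi(x_1)}=V(C)$ are both full, and the deleted matching gives $|N(o)\cap W_{\pi(x_1)}|=D<|V(C)|$, a contradiction.

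Your parameter is also off. Enlarging a fraction $x$ of the fibres adds vertices, and the matchings cost edges, so the construction has about $L_D(x)n^{3/2}$ edges with $L_D(x)=D(D+2x-x^2)/[2(D+x)^{3/2}]$. This gives $4L_D(x)^2=D+x-2x^2+O(1/D)$. For $x=1/10$ one checks $4L_D(x)^2-D<0.08$ for every $D$, so a tenth of the fibres does not reach $D+1/10$. The paper takes $x=1/4$ and verifies $4L_D(1/4)^2-D>1/10$ using $D\ge4$.
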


The paper is organized as follows. Section~2 gives notation,
preliminary facts and both constructions for the lower bounds.
Section~3 proves the upper bounds and
completes the proofs of Theorems~\ref{thm:general} and
\ref{thm:exceptional}.

\section{Preliminaries and lower bounds}

All graphs are finite, undirected and simple. An embedding is an
injective map on vertices that preserves adjacency; the copy it
defines need not be induced. We write $V(G)$,
$E(G)$, $e(G)$, $N_G(v)$ and $d_G(v)$ for the \emph{vertex set},
\emph{edge set}, \emph{number of edges}, \emph{neighbourhood} and
\emph{degree}. For a nonempty vertex set $U$, its
\emph{common neighbourhood} is
$N_G(U)=\bigcap_{u\in U}N_G(u)$.
When the ambient graph is clear, we write $N(v)$, $d(v)$ and $N(U)$
in place of $N_G(v)$, $d_G(v)$ and $N_G(U)$, respectively.
The maximum degree is $\Delta(G)=\max_{v\in V(G)}d_G(v)$.
For $U\subseteq V(G)$, the \emph{induced subgraph} $G[U]$ has
vertex set $U$ and all edges of $G$ with both endpoints in $U$.
The notation $C_4$ denotes a \emph{cycle of length four}. Unless another limiting
variable is explicitly indicated, asymptotic statements are taken
as $n\to\infty$ with $a,b$ fixed. Error estimates are uniform
over the host graphs under consideration; their constants may depend
on $a,b$ and on any other explicitly fixed parameters.
For a positive integer $m$, write
$[m]=\{1,\ldots,m\}$.

Write the centres of $S_a,S_b$ as $c_a,c_b$, and their leaves as
$p_1,\ldots,p_a$ and $q_1,\ldots,q_b$, respectively. In
$F_{a,b}=S_a\square S_b$, let $o=(c_a,c_b)$ be the \emph{central vertex},
$x_i=(p_i,c_b)$ the \emph{row vertices}, $y_j=(c_a,q_j)$ the
\emph{column vertices}, and $z_{ij}=(p_i,q_j)$ the
\emph{cell vertices}. These names
refer only to their positions in the product. The edges of
$F_{a,b}$ are $ox_i$, $oy_j$, $x_iz_{ij}$ and $y_jz_{ij}$.
Each pair $(i,j)$ gives a cycle of length four
$ox_iz_{ij}y_jo$ with opposite pairs $\{o,z_{ij}\}$ and
$\{x_i,y_j\}$.

For both constructions proving lower bounds, we use the following dense
\emph{polarity graph} $\mathcal P_\ell$, which does not contain
$C_4$ as a subgraph, as the underlying graph.
For a prime power $\ell$, let $\mathbb F_\ell$ be the field with
$\ell$ elements. The \emph{points} of the \emph{projective plane}
$\mathrm{PG}(2,\ell)$ are the subspaces of dimension one
$[u]=\mathbb F_\ell u$ of $\mathbb F_\ell^3$, where $u\ne0$.
There are $N=(\ell^3-1)/(\ell-1)=\ell^2+\ell+1$ such points.
For $u,v\in\mathbb F_\ell^3$, define
\[
 B(u,v)=u_1v_1+u_2v_2+u_3v_3,
\]
with all arithmetic in $\mathbb F_\ell$. This is a nondegenerate
symmetric bilinear form. Define $\mathcal P_\ell$ by joining
distinct $[u]$ and $[v]$ exactly when $B(u,v)=0$.
This condition is independent of the chosen nonzero representatives.
This is the construction of \cite{ERS}.
Let $A$ be the symmetric $N\times N$ matrix with entries in $\{0,1\}$, regarded
as a matrix over $\mathbb R$, defined by
$A_{[u],[v]}=1$ if $B(u,v)=0$, and
$A_{[u],[v]}=0$ otherwise. Unlike the adjacency matrix of
$\mathcal P_\ell$, the matrix $A$ retains any diagonal entry for which
$B(u,u)=0$.
Every row of $A$ contains $(\ell^2-1)/(\ell-1)=\ell+1$ ones:
for nonzero $u$, the equation $B(u,w)=0$ defines a subspace of dimension two. Any two distinct rows have a one in exactly one common column. For
distinct projective points $[u],[v]$, the two independent linear
equations $B(u,w)=B(v,w)=0$ have a solution space of dimension one. Since $A$ is symmetric, ordinary matrix multiplication gives
\[
 A^2=\ell I+J,
\]
where $I$ is the \emph{identity matrix} and $J$ is the \emph{all ones matrix},
both of order $N$. Setting all diagonal entries of $A$ to zero
gives the adjacency matrix of $\mathcal P_\ell$.
Consequently $\mathcal P_\ell$ does not contain $C_4$ as a subgraph,
has maximum degree at most $\ell+1$, and has
$(\ell+1)N/2+O(N)=(1/2+o(1))N^{3/2}$ edges as $\ell\to\infty$.

\subsection{Lower bounds in Theorems~\ref{thm:general} and~\ref{thm:exceptional}}

The constructions below adapt the blowups of Zhao et al.\
\cite[Section~2.2]{ZhaoEtAl}, including the use of two consecutive
fibre sizes and the deletion of perfect matchings between larger fibres.
Uniform blowups of graphs containing no $C_4$ as a subgraph were used
earlier by F\"uredi \cite[p.~76]{FurediTuranType} and
Brada\v{c} et al.\ \cite[Section~3]{Grid}.

\begin{proof}[Proof of the lower bounds]
Fix integers $2\leq a\leq b$.

For each $p\in V(\mathcal P_\ell)$, take an independent set $W_p$
of size $D$ or $D+1$, called the fibre over $p$, with all
fibres pairwise disjoint. Define $G_\ell$ on their union as follows.
If $pq\in E(\mathcal P_\ell)$, include all edges between $W_p$
and $W_q$, except that when both fibres have size $D+1$, delete
an arbitrary perfect matching between them. If $p\ne q$ and
$pq\notin E(\mathcal P_\ell)$, include no edges between $W_p$
and $W_q$. The projection is the map
$\pi:V(G_\ell)\to V(\mathcal P_\ell)$ with $\pi(w)=p$ for
$w\in W_p$.
For Theorem~\ref{thm:general}, all fibres will have size $D$;
this is the uniform blowup of $\mathcal P_\ell$ with fibres of size $D$.
For Theorem~\ref{thm:exceptional}, we allow both sizes.

Suppose that $F_{a,b}$ embeds in $G_\ell$, and identify its vertices
with their images. Let $M=D$ for the construction in
Theorem~\ref{thm:general} and $M=D+1$ for the construction in
Theorem~\ref{thm:exceptional}; thus every fibre has at most $M$
vertices. For each $(i,j)\in[a]\times[b]$, the
cycle of length four $ox_iz_{ij}y_jo$ projects to a closed walk of length
four in $\mathcal P_\ell$. Consecutive images are distinct,
since the fibres are independent. If all four images were
distinct, they would form a $C_4$ in $\mathcal P_\ell$.
Therefore $\pi(o)=\pi(z_{ij})$ or $\pi(x_i)=\pi(y_j)$.

For each $(i,j)$ choose one of these equalities that holds;
if both hold, choose either one. Let $\mathcal A$ consist of the
pairs $(i,j)$ for which $\pi(o)=\pi(z_{ij})$ was chosen. The distinct vertices
$o$ and $z_{ij}$ with $(i,j)\in\mathcal A$ all lie in
$W_{\pi(o)}$, so $|\mathcal A|+1\leq M$.
Let $H$ be the auxiliary bipartite graph with parts
$\{x_1,\ldots,x_a\}$ and $\{y_1,\ldots,y_b\}$, where
$x_iy_j\in E(H)$ exactly when $(i,j)\notin\mathcal A$.
Thus $|\mathcal A|=ab-e(H)$.
For every edge $x_iy_j$ of $H$, the chosen equality is
$\pi(x_i)=\pi(y_j)$. The map $\pi$ is therefore constant on the
vertex set of each connected component of $H$, so every component
has at most $M$ vertices. Different components may lie in the same fibre.

If $H$ has no edges, then $|\mathcal A|=ab$. Otherwise choose
a component $C$ with the largest number $r\geq1$ of row vertices,
and let $m$ be its number of column vertices. Define
$y=\min\{b,M-r\}$, so $m\leq y$.
For each column vertex $y_j\in V(C)$, at least $a-r$ row
vertices are not adjacent to $y_j$ in $H$.
A column vertex outside $C$ belongs to a component with at most
$\min\{r,a-r\}$ row vertices, by the maximality of $r$ and the
number of row vertices outside $C$.
For each such $y_j$, at least $\max\{r,a-r\}$ row vertices
are not adjacent to $y_j$ in $H$.
Since $|\mathcal A|=\sum_{j=1}^b(a-d_H(y_j))$ and
$a-r\leq\max\{r,a-r\}$, we obtain
\[
 |\mathcal A|
 \geq m(a-r)+(b-m)\max\{r,a-r\}
 \geq y(a-r)+(b-y)\max\{r,a-r\}.
\]
The second inequality follows by replacing $m$ by its upper
bound $y$ in an expression that is nonincreasing in $m$.

For Theorem~\ref{thm:general}, take all fibres of size $D$, so
$M=D$ and $|\mathcal A|\leq D-1$.
The following argument applies to every $2\leq a\leq b$.
Write $b-2=(a+1)u+s$, where $u\geq0$ and $0\leq s\leq a$
are integers. The definition of $D$ gives $D=a+b-2-u$.
If $H$ has no edges, then $|\mathcal A|=ab\geq a+b-2\geq D$,
a contradiction. Otherwise use the preceding bound with
$y=\min\{b,D-r\}$.

If $D-r\geq b$, then $y=b$ and $r\leq a-2-u$.
Consequently $|\mathcal A|\geq b(a-r)\geq b(u+2)\geq D$,
where $b(u+2)\geq2b\geq a+b-2\geq D$.
If $D-r<b$ and $r<a/2$, then
$|\mathcal A|\geq b(a-r)\geq b\lceil a/2\rceil
\geq a+b-2\geq D$; the penultimate inequality follows from
$b(\lceil a/2\rceil-1)\geq a-2$ for $a\geq2$ and $b\geq a$.

In the remaining case $D-r<b$ and $r\geq a/2$, let $k=a-r$.
Then $0\leq k\leq u+1$, $y=D-r$, and subtracting $D$ from
the lower bound $y(a-r)+(b-y)r$ gives
$a-s+k((a-1)u+s-a-2)+2k^2$.
For $k=0$ this equals $a-s\geq0$, and for $k=1$ it equals
$(a-1)u\geq0$. For $k\geq2$, use $u\geq k-1$ and
$(k-1)s\geq0$ to obtain the lower bound
$a+(a+1)k^2-(2a+1)k$. This quadratic is increasing for
$k\geq2$ and equals $a+2>0$ at $k=2$.
Thus $|\mathcal A|\geq D$ in every case, contradicting
$|\mathcal A|\leq D-1$.

Suppose $a<b$ and $b\equiv2\pmod{a+1}$. Write
$b=(a+1)t+2$ with an integer $t\geq1$; then
$D=a(t+1)\geq4$.
Allow arbitrary fibres of sizes $D$ and $D+1$, so $M=D+1$
and $|\mathcal A|\leq D$. If $H$ has no edges, then
$|\mathcal A|=ab>D$, since $b>t+1$.
Otherwise the common bound applies with
$y=\min\{b,D+1-r\}$.

If $D+1-r\geq b$, then $a-r\geq t+1$, so
$|\mathcal A|\geq b(a-r)\geq b(t+1)>a(t+1)=D$.
If $D+1-r<b$ and $r<a/2$, then
$|\mathcal A|\geq b(a-r)>ab/2>D$, because
$b-2(t+1)=(a-1)t>0$.
It remains to consider $D+1-r<b$ and $r\geq a/2$.
Let $k=a-r$. Then $0\leq k\leq t$ and $y=at+k+1$.
Substituting into $|\mathcal A|\geq y(a-r)+(b-y)r$ gives
$|\mathcal A|\geq(at+k+1)k+(t+1-k)(a-k)
=D+k((a-1)t-a+2k)$.
If $k\geq1$, the expression in parentheses is at least
$2k-1\geq1$, contrary to $|\mathcal A|\leq D$.
Hence $k=0$, so $C$ contains all $a$ row vertices.
Its number $m$ of column vertices satisfies $m\leq D+1-a=at+1$.
Every pair $(i,j)$ with $y_j\notin V(C)$ belongs to $\mathcal A$;
there are $a(b-m)$ such pairs.
Therefore $|\mathcal A|\geq a(b-m)\geq a(t+1)=D$.
Since $|\mathcal A|\leq D$, equality holds throughout, giving
$|\mathcal A|=D$ and $m=at+1$.

Since each fibre has at most $D+1$ vertices,
$W_{\pi(o)}=\{o\}\cup\{z_{ij}:(i,j)\in\mathcal A\}$ and
$W_{\pi(x_1)}=V(C)$, both of cardinality $D+1$.
The edge $ox_1$ implies that $\pi(o)$ and $\pi(x_1)$ are distinct
and adjacent in $\mathcal P_\ell$.
Every vertex of $C$ is a row or column vertex, so the embedding
requires $V(C)\subseteq N_{G_\ell}(o)$.
However, the deleted perfect matching between $W_{\pi(o)}$ and
$W_{\pi(x_1)}$ gives
$|N_{G_\ell}(o)\cap W_{\pi(x_1)}|=D<D+1=|V(C)|$, a contradiction. Thus the construction does not contain
$F_{a,b}$ as a subgraph for any choice of the fibres or of the
deleted matchings.

Take $x=0$ for Theorem~\ref{thm:general}, and any fixed
$x\in[0,1]$ for Theorem~\ref{thm:exceptional}.
Write $N=\ell^2+\ell+1$ and choose uniformly at random a set
$S\subseteq V(\mathcal P_\ell)$ of size $h=\lfloor xN\rfloor$.
Give $W_p$ size $D+1$ exactly when $p\in S$.
The case $x=0$ gives the uniform construction.
For an edge $pq$ of $\mathcal P_\ell$, the number of edges
between its fibres is $D^2$ if neither endpoint is in $S$,
and $D^2+D$ otherwise. In particular, if both endpoints are
in $S$, deleting the perfect matching leaves
$(D+1)^2-(D+1)=D^2+D$ edges.
The probability that $S\cap\{p,q\}\ne\varnothing$ is
$1-(N-h)(N-h-1)/[N(N-1)]=2x-x^2+O(1/N)$.
By linearity of expectation, summing the contributions of the
edges of $\mathcal P_\ell$ shows that some choice of $S$ gives at least
$(D^2+D(2x-x^2+O(1/N)))e(\mathcal P_\ell)
=(D(D+2x-x^2)/2+o(1))N^{3/2}$ edges.
For every choice of $S$, the graph $G_\ell$ has
$n_\ell=DN+h=(D+x)N+O(1)$ vertices. It does not contain $F_{a,b}$
as a subgraph when $x=0$, or when $a<b$ and $b\equiv2\pmod{a+1}$.

Define $L_D(x)=D(D+2x-x^2)/[2(D+x)^{3/2}]$ for $0\leq x\leq1$.
For the choice of $S$ above,
$e(G_\ell)/n_\ell^{3/2}\geq L_D(x)-o(1)$ as $\ell\to\infty$.
For each sufficiently large integer $n$, the
prime number theorem supplies a prime
$\ell=\ell(n)<\sqrt{n/(D+x)}-2$ such that
$\lim_{n\to\infty}\ell(n)/\sqrt{n/(D+x)}=1$.
Since $h\leq xN$, we have
$n_\ell\leq(D+x)(\ell^2+\ell+1)\leq n$ and
$n_\ell/n\to1$. Adding $n-n_\ell$ isolated vertices cannot
create the connected graph $F_{a,b}$. We therefore obtain
$\ex(n,F_{a,b})\geq(L_D(x)-o(1))n^{3/2}$ for all sufficiently
large $n$. Taking $x=0$ gives $L_D(0)=\sqrt D/2$, proving the
lower bound in Theorem~\ref{thm:general}.

For Theorem~\ref{thm:exceptional}, take $x=1/4$ and write
$L=L_D(1/4)=D(16D+7)/[4(4D+1)^{3/2}]$. Since $D\geq4$,
we have $D^{-1}-4D^{-2}\geq0$ and $4+D^{-1}\leq17/4$. Hence
\[
 4L^2-D
 =\frac{32+D^{-1}-4D^{-2}}{4(4+D^{-1})^3}
 \geq\frac8{(17/4)^3}>\frac1{10}.
\]
Thus $L>\sqrt{D+1/10}/2$, proving the
stated lower bound in Theorem~\ref{thm:exceptional}.

For the parameters in Theorem~\ref{thm:exceptional}, the construction
gives a stronger lower bound by maximizing $L_D(x)$. Differentiation gives
$L_D'(x)=D(D-(4D+2)x-x^2)/[4(D+x)^{5/2}]$.
The polynomial $D-(4D+2)x-x^2$ is strictly decreasing for
$x\geq0$, equals $D>0$ at $x=0$, and is negative at $x=1/4$.
Its unique positive zero is
$x_D=\sqrt{4D^2+5D+1}-2D-1\in(0,1/4)$.
Hence the maximum on $[0,1]$ is $L_D(x_D)$, and the stronger
lower bound is $(L_D(x_D)-o(1))n^{3/2}$.
\end{proof}

\section{Proofs of the main theorems}

\subsection{Codegree estimates}

We begin with Ramsey results used in
the first lemma for the upper bounds. Ramsey \cite{Ramsey} proved that for all
positive integers $r,s$, every sufficiently large graph contains
either $K_r$ or an independent set of size $s$. The bipartite Ramsey theorem for two colours (see Beineke and Schwenk
\cite{BeinekeSchwenk}) states that for every positive integer $t$ there
is an integer $B(t)$ such that every map
$\chi:A\times B\to\{0,1\}$ with $|A|,|B|\geq B(t)$ is constant on
$A'\times B'$ for some $A'\subseteq A$ and $B'\subseteq B$ of size $t$.
We also need the following bipartite canonization theorem of Rado
\cite{Rado}, in the formulation of Dob\'ak and Mulrenin
\cite{DobakMulrenin}. Since the domain is finite,
the colours can be relabelled by integers, so the theorem applies
to an arbitrary colour set $\Omega$.

\begin{lemma}\label{lem:canonical-ramsey}
For every positive integer $t$ there is an integer $C(t)$ with the
following property. If $|A|,|B|\geq C(t)$ and
$c:A\times B\to\Omega$ is any map, then there are subsets
$A'\subseteq A$, $B'\subseteq B$, each of size $t$, such that on
$A'\times B'$ at least one of the following holds:
\begin{enumerate}
\item $c$ is constant;
\item $c$ is injective;
\item $c(u,w)=\phi(u)$ for an injective map $\phi:A'\to\Omega$;
\item $c(u,w)=\psi(w)$ for an injective map $\psi:B'\to\Omega$.
\end{enumerate}
\end{lemma}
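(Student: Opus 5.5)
The plan is to obtain the bipartite canonical Ramsey theorem by iterating the ordinary bipartite Ramsey theorem for two colours, quoted above with threshold $B(t)$, together with Ramsey's theorem. Relabel the colours by integers. Fix a large $T$ depending on $t$, and first pass to subsets $A_0\subseteq A$, $B_0\subseteq B$ of size $T$. For two distinct pairs $(u,w),(u',w')$ in $A_0\times B_0$, the relevant question is whether $c(u,w)=c(u',w')$. I will classify these comparisons by the relative position of the pairs, after fixing linear orders on $A_0$ and $B_0$.

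The first step handles comparisons within a row or within a column. For each $u$, the colouring of pairs $\{w,w'\}\subseteq B_0$ by whether $c(u,w)=c(u,w')$ is a two-colouring of $K_{|B_0|}$. Ramsey's theorem gives either an all-equal set or an all-distinct set. I do this uniformly rather than one row at a time. I encode, for a pair of rows $u<u'$ and a pair of columns $w<w'$, the full equality pattern among the four values $c(u,w),c(u,w'),c(u',w),c(u',w')$. This is a colouring with finitely many colours of the product of pairs of $A_0$ with pairs of $B_0$. A product Ramsey theorem (pairs $\times$ pairs) then yields $A_1,B_1$ of size still large on which this pattern is constant. That product theorem follows from iterating Ramsey's theorem and the bipartite Ramsey theorem, since finitely many colours reduce to repeated two-colour steps. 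Patterns involving only one row and two columns, or two rows and one column, are also constant, as restrictions of the four-point pattern.

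The second step is a case analysis on the constant pattern. Write $E_{\text{row}}$ for whether $c(u,w)=c(u,w')$, $E_{\text{col}}$ for whether $c(u,w)=c(u',w)$, and $E_{\text{diag}}$ for whether $c(u,w)=c(u',w')$ (and similarly for the anti-diagonal).

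If $E_{\text{row}}$ and $E_{\text{col}}$ both hold, transitivity makes $c$ constant; this is case 1.

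If $E_{\text{row}}$ holds and $E_{\text{col}}$ fails, then $c(u,w)=\phi(u)$. Here $\phi(u)\neq\phi(u')$ because $E_{\text{col}}$ fails, so $\phi$ is injective; this is case 3. Case 4 is symmetric.

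If both fail, I must rule out coincidences across different rows and columns. The diagonal equality $c(u,w)=c(u',w')$ for all $u<u'$, $w<w'$ is impossible once the sets have size at least 3. Taking $u_1<u_2<u_3$ and $w_1<w_2<w_3$ gives $c(u_1,w_1)=c(u_2,w_2)=c(u_3,w_3)$ and $c(u_1,w_2)=c(u_2,w_3)$. Also $c(u_1,w_1)=c(u_3,w_3)=c(u_2,w_2)$ and $c(u_1,w_1)=c(u_2,w_3)$ by applying the diagonal relation to the pairs $(u_1,w_1),(u_2,w_3)$. Hence $c(u_2,w_2)=c(u_2,w_3)$, contradicting the failure of $E_{\text{row}}$. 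The anti-diagonal is handled the same way. Thus all four values are distinct for every $2\times2$ configuration, and $c$ is injective on $A_1\times B_1$; this is case 2. Finally, I shrink to size exactly $t$, which preserves each property.

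The main obstacle is the product Ramsey step for pairs $\times$ pairs with finitely many colours, since only the one-dimensional and bipartite ($1\times1$) versions are quoted. My first attempt will be to prove it by induction. For each pair $\{u,u'\}$, colour the pairs of $B_0$ by the pattern and apply Ramsey's theorem in $B$ successively across all pairs of a fixed small set. The cleaner alternative is to cite the product Ramsey theorem directly, or Rado's original canonical theorem as formulated by Dob\'ak and Mulrenin, which the paper permits.
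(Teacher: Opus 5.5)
Your argument is correct, but it takes a different route from the paper. The paper gives no proof of this lemma at all: it quotes Rado's bipartite canonization theorem in the Dob\'ak--Mulrenin formulation, and relabels colours by integers so that it covers an arbitrary $\Omega$.

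You instead derive the lemma directly. You apply the product Ramsey theorem on $[A]^2\times[B]^2$ to the (at most $15$) equality patterns of a $2\times 2$ subgrid, and then run a short case analysis on the resulting constant pattern.

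\begin{itemize}
\item \emph{Ingredients.} Your route is self-contained and needs only the multicolour Ramsey theorem for pairs. Your iteration (first make each row pair's colouring of column pairs constant, then apply Ramsey to the row pairs) is the standard proof of the product theorem. The bipartite Ramsey theorem is not actually needed.
\item \emph{Colour set.} Your argument only compares colours for equality, so it works for arbitrary $\Omega$ without any relabelling.
\item \emph{Cost.} You get tower-type bounds on $C(t)$. This is harmless here, because the paper uses $C(t)$ only qualitatively, with $t$ depending on $a,b$.
\end{itemize}

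Two details should be made explicit.
\begin{itemize}
\item Run the whole argument with sets of size at least $\max\{t,3\}$, and only shrink to size $t$ at the end.
\item Uniformity of the one-row relation is not just a ``restriction'' of the four-point pattern. A row that is the maximum of $A_1$ only ever appears as the second row of a pair, and the minimum only as the first. The first-row and second-row entries of the constant pattern are forced to agree by a middle row $u_1<u_2<u_3$: $u_2$ is the second row of $(u_1,u_2)$ and the first row of $(u_2,u_3)$. The same applies to columns. This is what makes $E_{\mathrm{row}}$ and $E_{\mathrm{col}}$ well defined for every row and column.
\end{itemize}

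With these points in place, your diagonal and anti-diagonal exclusions, and the resulting injectivity, go through exactly as you wrote.
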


For the rest of this section, fix integers $2\leq a\leq b$ and write
$X=a+b-1$. For distinct vertices $u,w$ of a graph $G$, write
\[
 Q(u,w)=N_G(u)\cap N_G(w),\qquad q(u,w)=|Q(u,w)|.
\]
Thus $Q(u,w)$ is their common neighbourhood and $q(u,w)$ their
\emph{codegree}. An oriented edge $(v,u)$ means $vu\in E(G)$.

The auxiliary graph in Lemma~\ref{lem:counting}(i) follows
Gao et al.\ \cite[Claim~4.4]{GeometricShapes}.
Part~(ii) combines the random common neighbourhood estimate in
Fox and Sudakov \cite[proof of Lemma~2.1]{FoxSudakovDRC} with a variant
of the random selection and colour exclusion argument in Dob\'ak and
Mulrenin \cite[proof of Theorem~1.3, Section~3]{DobakMulrenin}.

\begin{lemma}[Codegree estimates]\label{lem:counting}
There is a constant $C_1=C_1(a,b)$ such that every graph $G$ on
$n$ vertices that does not contain $F_{a,b}$ as a subgraph satisfies:
\begin{enumerate}
\item[(i)] $\sum_vd(v)^2=O_{a,b}(n^2)$,
$\sum_{\{u,w\}:q(u,w)>X}q(u,w)=o(n^2)$, and
$e(G)\leq\left(\frac{\sqrt X}{2}+o(1)\right)n^{3/2}$.
\item[(ii)] For a vertex $v$ with $d=d(v)>0$ and $0<\eta<1$,
let $L$ be the graph on $N(v)$ in which $xy\in E(L)$ exactly when
there is a vertex $z\ne v$ with
$\{x,y\}\subseteq Q(v,z)$ and $q(v,z)\leq\eta d$. Then
\[
 \left(\frac{2e(L)}{d^2}\right)^a
 \leq C_1\left(\eta+\frac1d\right).
\]
\end{enumerate}
\end{lemma}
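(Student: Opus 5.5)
For part (i), the plan is to prove the three claims in order, each feeding the next.

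\emph{Degree sums.} Since $F_{a,b}\supseteq K_{2,b'}$-type configurations only for fixed sizes, we first show that $G$ contains no $K_{s,t}$ for suitable fixed $s,t$, and that codegrees are controlled. Indeed, if some pair $u,w$ had codegree larger than a constant, we could not immediately embed $F_{a,b}$. So we instead bound $\sum_v d(v)^2$ directly. Let $T=\sum_{\{u,w\}}q(u,w)=\sum_v\binom{d(v)}2$. We show that large codegrees force a copy of $F_{a,b}$ with the help of an auxiliary graph, following \cite[Claim~4.4]{GeometricShapes}. For each vertex $o$, consider the graph $\Gamma_o$ on $N(o)$ in which $x\sim y$ when $q(x,y)$, counted outside $\{o\}$, is at least $ab+a+b+1$.

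If $\Gamma_o$ contains a copy of $K_{a,b}$ with rows $x_i$ and columns $y_j$, we embed the cells $z_{ij}$ greedily, choosing distinct common neighbours that avoid all previously used vertices. Hence $\Gamma_o$ is $K_{a,b}$-free, so $e(\Gamma_o)=O(d(o)^{2-1/a})$ by K\H{o}v\'ari--S\'os--Tur\'an.

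Now count paths $x\,o\,y$ with $xy\notin\Gamma_o$ and $q(x,y)$ large versus small. Each pair $\{x,y\}$ with $q(x,y)>X':=ab+a+b+1$ has at least $q(x,y)-O(1)$ middle vertices $o$ for which $xy\in\Gamma_o$, whereas the total over $o$ is $\sum_o O(d(o)^{2-1/a})$. Combined with the trivial identity $\sum_{\{x,y\}}q(x,y)=\sum_o\binom{d(o)}2$ and the bound $\sum_{q\le X'}q\le X'\binom n2$, we get $\sum_o d(o)^2\le O(n^2)+O(\sum_o d(o)^{2-1/a})$. H\"older then gives $\sum_v d(v)^2=O(n^2)$.

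\emph{Refining to the threshold $X$.} Here we use that the central vertex needs only $a+b$ neighbours. Fix a pair $\{u,w\}$ with $q(u,w)>X$. The idea is that such pairs cannot be typical. The plan is to consider, for each $o$, the graph $\Gamma'_o$ on $N(o)$ with threshold $X+1$ in place of $X'$, and to show $e(\Gamma'_o)=o(d(o)^2)+O(d(o))$.

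To do this, apply Lemma~\ref{lem:canonical-ramsey}, together with Ramsey's theorem, to a hypothetical dense $\Gamma'_o$. Supersaturation gives many copies of $K_{T,T}$ in $\Gamma'_o$. Colour each pair $(x,y)$ by its set of common neighbours, and canonize. In the injective and one-sided injective cases, distinct cells can be chosen directly. In the constant case, all pairs share the same $>X$ vertices, and then a single vertex of that shared set serves with $o$ to produce $F_{a,b}$ using row/column roles; this is the step where the exact value $X=a+b-1$ must enter.

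Once this is done, summing over $o$ gives $\sum_{q>X}q(u,w)\le\sum_o e(\Gamma'_o)+O(n^{3/2})=o(n^2)$, using $\sum d^2=O(n^2)$ and uniform integrability. Finally,
\[
\sum_v\binom{d(v)}2=\sum q\le X\binom n2+o(n^2),
\]
and Jensen gives $n\binom{2e/n}{2}\le (X/2+o(1))n^2$, hence $e(G)\le(\sqrt X/2+o(1))n^{3/2}$.

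For part (ii), choose $a$ uniformly random vertices $x_1,\dots,x_a\in N(v)$ (dependent random choice in the style of \cite{FoxSudakovDRC}). The probability that a fixed $y$ is $L$-adjacent to all of them is at least about $(2e(L)/d^2)^a$ on average, by convexity. Thus the expected size of the common $L$-neighbourhood $Y$ is at least $d\,(2e(L)/d^2)^a$.

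If this exceeds $C_1(\eta d+1)$, we try to embed $F_{a,b}$ with $o=v$, rows $x_i$, and columns chosen in $Y$. Each pair $x_iy$ is witnessed by some $z\ne v$ of small codegree $q(v,z)\le\eta d$. The colour-exclusion argument of \cite{DobakMulrenin} is applied to the colouring $(x_i,y)\mapsto z$ via Lemma~\ref{lem:canonical-ramsey}. Because each $z$ covers at most $\eta d$ columns, the constant and row-injective cases are impossible once $|Y|\gg\eta d$, and the remaining cases yield distinct cells.

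The main obstacle is making this colour exclusion rigorous with only $a$ rows and an unbounded number of columns, which requires randomly sparsifying the columns to size $C(t)$ first.
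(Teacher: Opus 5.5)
Your first stage of (i) is correct and simpler than the paper's route: at threshold $ab+a+b+1$ the cells embed greedily, so $\Gamma_o$ is $K_{a,b}$-free and H\"older gives $\sum_v d(v)^2=O(n^2)$. Your closing Jensen step is also fine. The refinement to threshold $X$, however, fails as described, because an injective set-valued colouring gives no distinct cells. For instance, $Q(x,y)\setminus\{o\}=K\cup\{\phi(x),\psi(y)\}$ with $|K|=X-2$ and $\phi,\psi$ injective is an injective colouring. Yet all cells of an $a\times b$ grid must then lie in a set of size $2a+2b-3<ab$ (e.g.\ $a=b=4$), and the one-sided cases fail in the same way. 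Your constant case, as written, also cannot work: $o$ together with a single shared vertex only gives a $K_{2,m}$, which does not contain $F_{a,b}$. The paper's argument runs as follows.
\begin{enumerate}
\item Use Ramsey's theorem ($G$ is $K_{(a+1)(b+1)}$-free) to make both sides independent, so that common neighbours avoid the rows and columns.
\item Fix $X$ distinct representatives $c_1(x,y),\dots,c_X(x,y)\in Q(x,y)\setminus\{o\}$ and canonize each map $c_h$ separately.
\item An injective $c_h$ supplies the cells.
\item Otherwise, take a one-sided injective $c_h$ evaluated at $X$ distinct rows (or columns), or the $X$ values of the constant maps. Together with $o$ these are $a+b$ vertices completely joined to at least $ab+1$ vertices, so $K_{a+b,ab+1}\supseteq F_{a,b}$. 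This is exactly where $X=a+b-1$ enters.
\end{enumerate}

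In (ii), the canonical lemma cannot be applied to $\{x_1,\dots,x_a\}\times Y$ at all, since it requires both sides to have size at least $C(t)$; sparsifying the columns does not change this. More seriously, your case analysis omits the column-injective outcome $c(x_iy)=\psi(y)$, which is neither excluded nor harmless. A vertex $z_y$ adjacent to $v,x_1,x_2,y$ may have $q(v,z_y)=3$, and for a fixed pair of rows it may be the only witness for every $y$. That forces the cells in rows $1$ and $2$ of each column to coincide.

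The missing idea is to use the randomness of the rows against such within-column coincidences. Write $d_z(y)=|\{x\in N_L(y):c(xy)=z\}|\le\eta d$. Then the expected number of $y$ with $x_i,x_j\in N_L(y)$ and $c(x_iy)=c(x_jy)$ is $d^{-2}\sum_y\sum_z d_z(y)^2\le\eta d$. The coincidences $x_i=x_j$ and $c(x_iy)=x_j$ each cost at most $1$ in expectation. Hence some tuple leaves at least $\rho^a d-O_a(\eta d+1)$ good columns, where $\rho=2e(L)/d^2$. Coincidences between different columns are then avoided greedily: each already used vertex $u$ and each row $x_i$ exclude at most $\eta d$ columns $y$ with $c(x_iy)=u$. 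So no Ramsey-type argument is needed in (ii) at all.
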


\begin{proof}
For (i), for each $v\in V(G)$ define
$H_v$ on $N(v)$ by $uw\in E(H_v)$ if and only if
$q(u,w)>X$. An edge of $H_v$ need not belong to $G$.

\par\medskip\noindent\textbf{Claim.}\enspace There is an integer $r_0$,
depending only on $a,b$, such that each $H_v$ does not contain
$K_{r_0,r_0}$ as a subgraph.

\begingroup\renewcommand{\qedsymbol}{}
\begin{proof}[Proof of Claim]
Let $m=ab+1$ and $f=(a+1)(b+1)$.
In the notation for $F_{a,b}$ from Section~2, the row vertices
are $x_1,\ldots,x_a$, and the column vertices are
$y_1,\ldots,y_b$. Together they form one bipartition class
of size $a+b$; the centre $o$ and the $ab$ cell vertices
$z_{ij}$ form the other class, of size $ab+1$. Thus
$F_{a,b}\subseteq K_{a+b,ab+1}$; also $ab+1\geq a+b$.

Choose $T\geq m$, then $s_0$, then $r_0$ sufficiently large
for the $X$ successive applications of Lemma~\ref{lem:canonical-ramsey},
the bipartite Ramsey theorem and the ordinary Ramsey theorem
below, respectively. These integers depend only on $a,b$.

Suppose that $H_v$ contains a $K_{r_0,r_0}$ with parts $A_0,B_0$.
Since $K_f$ contains $F_{a,b}$, the graph $G$ does not contain
$K_f$ as a subgraph.
Ordinary Ramsey applied to $G[A_0]$ and $G[B_0]$ gives independent
sets $A_1\subseteq A_0$ and $B_1\subseteq B_0$ of size $s_0$.
Define $\chi:A_1\times B_1\to\{0,1\}$ by
$\chi(u,w)=1$ if $uw\in E(G)$ and $\chi(u,w)=0$ otherwise.
The bipartite Ramsey theorem gives sets
$L\subseteq A_1$ and $R\subseteq B_1$, each of size $T$, such that $\chi$ is constant
on $L\times R$. If $\chi=1$ on $L\times R$, then $G$ contains
$K_{a+b,ab+1}\supseteq F_{a,b}$, since
$|L|=|R|=T\geq m\geq ab+1$. Hence $\chi=0$ on $L\times R$.
Together with the independence of $L$ and $R$, this implies
that $G[L\cup R]$ is edgeless.

For every $(u,w)\in L\times R$, we have $uw\in E(H_v)$.
Thus $u,w$ have at least $X+1$ common neighbours in $G$, including $v$. Choose distinct
$c_1(u,w),\ldots,c_X(u,w)\in Q(u,w)\setminus\{v\}$.
These vertices lie outside $L\cup R$, since $G[L\cup R]$ is
edgeless. Apply Lemma~\ref{lem:canonical-ramsey} successively to the
$X$ maps $c_h:L\times R\to V(G)$. By the choice of $T$, there
are subsets of $L,R$, each of size $m$, on whose product every $c_h$
satisfies one of the four conclusions of the lemma; each conclusion
is preserved when the two sets are further restricted. Write $L,R$
also for these subsets. For each $(u,w)$ in their product,
$c_1(u,w),\ldots,c_X(u,w)$ are still distinct.

Consider the four possibilities for the maps $c_h$. If some
$c_h$ is injective on $L\times R$, choose distinct
$u_1,\ldots,u_a\in L$ and $w_1,\ldots,w_b\in R$. The vertices
$v,u_i,w_j,c_h(u_i,w_j)$ are all distinct and give the central,
row, column, and cell vertices of $F_{a,b}$, respectively.
The centre is adjacent to each $u_i,w_j$, and each
$c_h(u_i,w_j)$ is adjacent to $u_i,w_j$, giving all required
edges.

If $c_h(u,w)=\phi(u)$ with $\phi$
injective, the images of $X$ distinct vertices of $L$, together
with $v$, are $X+1=a+b$ distinct vertices adjacent to every
vertex of $R$. These images lie outside $L\cup R\cup\{v\}$.
Since $|R|=m\geq ab+1$, this gives
$K_{a+b,ab+1}\supseteq F_{a,b}$. If
$c_h(u,w)=\psi(w)$ with $\psi$ injective, the images of $X$
distinct vertices of $R$, together with $v$, are adjacent to
every vertex of $L$, giving $K_{a+b,ab+1}$ in $G$.
If every $c_h$ is constant,
write its value as $z_h$. The $z_h$ are distinct, and
$v,z_1,\ldots,z_X$ are adjacent to every vertex of $L\cup R$.
Again $G$ contains $K_{a+b,ab+1}$. Each case gives an
$F_{a,b}$ subgraph of $G$, contrary to the assumption that
$G$ does not contain $F_{a,b}$ as a subgraph. This proves the claim.
\end{proof}
\endgroup

By the claim and the K\H{o}v\'ari--S\'os--Tur\'an theorem
\cite{KST}, there is a constant
$C=C(a,b)$ with $e(H_v)\leq C d(v)^{2-1/r_0}$. For a fixed pair
$\{u,w\}$ with $q(u,w)>X$, the condition $uw\in E(H_v)$ holds
exactly when $v\in Q(u,w)$, for $q(u,w)$ choices of $v$. Thus
\[
 \sum_{\{u,w\}:q(u,w)>X}q(u,w)
 =\sum_v e(H_v)\leq C\sum_vd(v)^{2-1/r_0}.
\]
Define $M_2=\sum_vd(v)^2$ and $\alpha=1-1/(2r_0)\in(0,1)$.
Counting pairs of neighbours in two ways and then applying H\"older's
inequality gives
\[
 \frac{M_2-2e(G)}2=\sum_{\{u,w\}}q(u,w)
 \leq X\binom n2+C n^{1-\alpha}M_2^\alpha.
\]
The sum of $q(u,w)$ over pairs with $q(u,w)\leq X$ is at most
$X\binom n2$. For pairs with $q(u,w)>X$, use
$2-1/r_0=2\alpha$ and H\"older's inequality to obtain
$\sum_vd(v)^{2\alpha}\leq n^{1-\alpha}M_2^\alpha$.
Since $0<\alpha<1$, Young's inequality gives
$C n^{1-\alpha}M_2^\alpha\leq M_2/4+O_{a,b}(n)$.
Substituting this into the displayed count and using $2e(G)\leq n^2$
yields $M_2=O_{a,b}(n^2)$. The displayed bound for
$\sum_{\{u,w\}:q(u,w)>X}q(u,w)$ is now
$O_{a,b}(n^{1+\alpha})
=O_{a,b}(n^{2-1/(2r_0)})=o(n^2)$, uniformly over all such graphs $G$.

Cauchy--Schwarz gives
$4e(G)^2/n\leq M_2=O(n^2)$, hence $e(G)=O(n^{3/2})$
and $2e(G)=o(n^2)$.
Since $\sum_{\{u,w\}}q(u,w)\leq X\binom n2+o(n^2)$ and
$2e(G)=o(n^2)$, the identity
$M_2-2e(G)=2\sum_{\{u,w\}}q(u,w)$ gives
$M_2\leq Xn^2+o(n^2)$. A second application of
$4e(G)^2/n\leq M_2$ gives
$e(G)\leq(\sqrt X/2+o(1))n^{3/2}$, completing (i).

For (ii), the assertion is immediate if $e(L)=0$. For every
$xy\in E(L)$, choose $c(xy)\ne v$ with
$\{x,y\}\subseteq Q(v,c(xy))$ and $q(v,c(xy))\leq\eta d$.
For every $x\in V(L)$ and $z\ne v$, this gives
\[
 |\{y\in N_L(x):c(xy)=z\}|\leq\eta d,
\]
since all such $y$ belong to $Q(v,z)$.

Write $\rho=2e(L)/d^2\leq1$. Sample $x_1,\ldots,x_a$ independently
and uniformly from $V(L)$, allowing repetitions, and put
$Y_0=\bigcap_{i=1}^aN_L(x_i)$. By convexity,
\[
 \mathbb E|Y_0|=d^{-a}\sum_y d_L(y)^a\geq\rho^a d.
\]
Let $Y\subseteq Y_0$ consist of those $y$ for which
$x_1,\ldots,x_a,c(x_1y),\ldots,c(x_ay)$ are pairwise distinct.
In particular, $Y=\varnothing$ whenever two sampled entries coincide.
We claim that
\[
 \mathbb E|Y|\geq\rho^a d-O_a(\eta d+1).
\]
The possible equalities are $x_i=x_j$, $c(x_i y)=x_j$,
and $c(x_i y)=c(x_j y)$, with distinct indices.
For each fixed pair of indices, either of the first two types
contributes at most $d^{-2}\sum_y d_L(y)=\rho\leq1$ to the
expected loss, after the restrictions on the other entries are omitted.
For the third type, put
$d_z(y)=|\{x\in N_L(y):c(xy)=z\}|$.
Since $d_z(y)\leq\eta d$ and $\sum_zd_z(y)=d_L(y)$, its
contribution for each fixed pair of indices is at most
\[
 d^{-2}\sum_y\sum_zd_z(y)^2
 \leq d^{-2}\eta d\sum_y d_L(y)\leq\eta d.
\]
The claim follows by the union bound over the $O(a^2)$ pairs.

Suppose $\rho^a d>C_1(\eta d+1)$ for a sufficiently large
constant $C_1=C_1(a,b)$, and fix a tuple attaining at least
the preceding lower bound on $|Y|$.
We select $b$ column vertices from $Y$ successively.
Suppose $j<b$ columns and their $aj$ cell vertices have already
been chosen, all distinct, and let $U$ be the set of these vertices.
Then $|U|=(a+1)j=O_{a,b}(1)$.
The next column $y$ must avoid $U$, and each cell vertex
$c(x_i y)$ must also avoid $U$.
The bound on the fibres of $c$ excludes at most
$|U|+a|U|\eta d=O_{a,b}(\eta d+1)$ choices.
All other distinctness conditions follow from the definition of $Y$,
from $c(x_i y)\ne v$, and from the fact that $G$ is simple:
$x_i\ne y$ and $c(x_i y)\notin\{x_i,y\}$.
For sufficiently large $C_1$, all $b$ columns can therefore be selected.
The vertices $v,x_i,y_j,c(x_i y_j)$ give a copy of $F_{a,b}$,
a contradiction. Hence $\rho^a\leq C_1(\eta+1/d)$, proving (ii).
\end{proof}

\subsection{A local reciprocal degree inequality}

The exclusion of vertex disjoint stars in Lemma~\ref{lem:two-cores}
adapts the argument excluding a matching in Zhao et al.\
\cite[Claim~3.1]{ZhaoEtAl}, using large common neighbourhoods
to complete the embedding.

\begin{lemma}[Two independent sets with large common neighbourhoods]\label{lem:two-cores}
Fix integers $2\leq a\leq b$. There is a constant
$C_2=C_2(a,b)$ such that the following holds.
Let $G$ be a graph that does not contain $F_{a,b}$ as a subgraph.
Suppose that $Q,R\subseteq V(G)$ are nonempty,
disjoint independent sets, each of size at most $a+b-1$, and that
$|N(Q)|,|N(R)|\geq C_2$.
Let $T$ be the bipartite graph with parts $Q,R$ whose edges are
the edges of $G$ between these sets. Every vertex $x\in Q$ with
$d_T(x)>0$ satisfies
\[
 \frac1{d_T(x)}\sum_{y\in N_T(x)}\frac1{d_T(y)}
 \geq\max\left\{\frac1{a(a+b-d_T(x))},
                   \frac1{a+b-\lceil d_T(x)/a\rceil}\right\}.
\]
The same assertion holds with $Q$ and $R$ interchanged.
\end{lemma}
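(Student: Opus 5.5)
The plan is to argue by contradiction: if the inequality fails at some $x\in Q$ with $d=d_T(x)>0$, then we embed $F_{a,b}$ with $x$ as the central vertex $o$. Two features of the setting make this possible.

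\emph{First}, $x\in Q$ is adjacent to every vertex of $N(Q)$. So $o=x$ has two kinds of available neighbours: the $d$ vertices of $N_T(x)\subseteq R$, and the at least $C_2$ vertices of $N(Q)$. Row and column vertices will be taken from $P\subseteq N_T(x)$ and $S\subseteq N(Q)$.

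\emph{Second}, the cell vertices are of three kinds.
\begin{itemize}
\item If both the row and the column lie in $P\subseteq R$, the cell can be taken from $N(R)$. Since $|N(R)|\geq C_2$, all such cells can be chosen greedily and disjoint from everything else once $C_2\geq(a+1)(b+1)$.
\item If one lies in $P$ and one in $S$, the cell must be a vertex of $Q\setminus\{x\}$ adjacent in $T$ to the $P$-vertex. Any such vertex is automatically adjacent to the $S$-vertex.
\item If both lie in $S$, any vertex of $Q\setminus\{x\}$ will do.
\end{itemize}
Independence of $Q$ and $R$ guarantees that these choices do not collide with the rows and columns. Hence $F_{a,b}$ embeds as soon as the required distinct $Q$-vertices exist. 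This is a system of distinct representatives problem: each vertex $y\in P$ used as a row (respectively column) needs $|S\cap\text{columns}|$ (respectively $|S\cap\text{rows}|$) private $T$-neighbours in $Q\setminus\{x\}$. Equivalently, we need vertex-disjoint stars in $T$ centred at the vertices of $P$, plus some further $Q$-vertices for the $S\times S$ cells. This is the analogue of the matching exclusion in Zhao et al.

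Next we choose the split to match the two terms of the maximum.
\begin{itemize}
\item For the term $1/(a(a+b-d))$: take $P=N_T(x)$ as columns, or as rows and columns, and fill the remaining $a+b-d$ positions from $S$. Then each $y\in P$ needs about $a$ private neighbours in $Q\setminus\{x\}$ when $S$ is placed in the rows.
\item For the term $1/(a+b-\lceil d/a\rceil)$: place the $P$-vertices so that the $S$-block is smaller, with roughly $\lceil d/a\rceil$ positions forced.
\end{itemize}
In each case, the failure of the required star packing gives, by Hall's theorem (the weighted version for $k$-fold stars), a set $P'\subseteq N_T(x)$ whose $T$-neighbourhood in $Q\setminus\{x\}$ is too small. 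Since $|Q|\leq a+b-1$, "too small" means fewer than a fixed multiple of $|P'|$.

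The final step converts this Hall obstruction into the reciprocal-degree bound. The cleanest route I would try is fractional: the existence of disjoint stars is implied by a fractional assignment in which each $y$ spreads weight $1/d_T(y)$ over its neighbours. The obstruction therefore forces $\sum_{y\in N_T(x)}1/d_T(y)$ to be at least $d$ divided by the total capacity available, and that capacity is the quantity $a(a+b-d)$ or $a+b-\lceil d/a\rceil$ in the statement.

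I expect the main obstacle to be exactly this conversion: choosing which positions of $F_{a,b}$ receive $P$-vertices so that the Hall deficiency yields precisely the stated denominators, and handling small $d$ (for instance $d\le a$) and the case $d>b$ separately. Symmetry of the hypotheses in $Q$ and $R$ gives the interchanged statement for free.
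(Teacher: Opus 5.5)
\paragraph{Main gap: the second term.} The term $1/(a+b-\lceil d/a\rceil)$ cannot be derived from copies of $F_{a,b}$ centred at $x$. So no choice of which positions receive $P$-vertices will produce it.

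Take $a=2$, $b=3$, $|Q|=4$, $R=N_T(x)$ with $|R|=3$, and $T=K_{4,3}$. Add many vertices joined to all of $Q$ and many joined to all of $R$. Then $d=3$ and each $y\in R$ has $d_T(y)=4$, so the average of $1/d_T(y)$ is $1/4<1/3=1/(a+b-\lceil 3/2\rceil)$.

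Yet no copy of your type is centred at any $q\in Q$. With $r$ rows and $s$ columns in $N_T(q)=R$ we have $r+s\leq 3$, hence $rs\leq 2$. Such a copy therefore needs $ab-rs\geq 4$ distinct cells in $Q\setminus\{q\}$, which has only three vertices.

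The copy that does exist is centred at $z\in R$:
\begin{itemize}
\item rows $q_1,q_2$;
\item columns $q_3,q_4$ and a vertex $w\in N(R)$;
\item the four cells $q_i\times q_j$ taken from $N(Q)$;
\item the cells $q_1\times w$, $q_2\times w$ being the two vertices of $R\setminus\{z\}$.
\end{itemize}

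This is what the paper exploits. For $d>a(a+b)/(a+1)$ the first term already dominates, so assume $a<d\leq a(a+b)/(a+1)$. If some $z\in N_T(x)$ has $\ell=d_T(z)>h=a+b-\lceil d/a\rceil$, the star-exclusion claim is applied with $Q,R$ interchanged, centred at $z$, with $c'=a+b-\ell$ leaves per star. Since $x\in N_T(z)$ is joined to all of $N_T(x)\setminus\{z\}$ and $d-1\geq ac'$, this forbids $a-1$ disjoint $c'$-stars centred in $N_T(z)\setminus\{x\}$ with leaves in $N_T(x)\setminus\{z\}$. A degree count then gives $\sum_{y\in N_T(x)\setminus\{z\}}d_T(y)\leq(d-2)h$. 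Cauchy--Schwarz then gives $\sum_{y\in N_T(x)\setminus\{z\}}1/d_T(y)\geq(d-1)^2/((d-2)h)>d/h$. So the $Q\leftrightarrow R$ symmetry is needed inside the proof of the bound at $x$, not just to transfer the final statement.

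\paragraph{The first term.} Your configuration here is transposed. With $r$ rows and $s$ columns in $N_T(x)$, your own cell count requires $ab-rs$ distinct vertices of $Q\setminus\{x\}$. Letting $S$ supply the rows ($r=0$) requires $ab>a+b-2\geq|Q\setminus\{x\}|$ of them, so it never gives a contradiction. The useful choice is $r=a$, $s=d-a$: all rows are star centres in $N_T(x)$, each needing $c=a+b-d$ private leaves in $Q\setminus\{x\}$, and only $c$ columns come from $N(Q)$.

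The conversion to reciprocal degrees also needs no Hall or fractional argument. Hall's theorem for fixed centres does not handle the choice of which $a$ vertices serve as centres, and your implication from a fractional assignment to disjoint stars is not justified as stated. The paper uses a greedy argument instead:
\begin{itemize}
\item Sort $d_j=|N_T(y)\setminus\{x\}|$ in decreasing order.
\item If $d_j\geq c(a-j+1)$ for all $j\leq a$, choosing centres greedily in the order $a,\ldots,1$ gives the forbidden stars.
\item Hence some $i<a$ has $d_j\leq c(a-i)-1$ for all $j>i$.
\item Therefore $\sum_y 1/d_T(y)\geq (d-i)/(c(a-i))\geq d/(ac)$, using $d>a$.
\end{itemize}
The case $d\leq a$ is immediate from $d_T(y)\leq|Q|\leq a+b-1$, and no separate case $d>b$ arises.
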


\begin{proof}
For a positive integer $c$, a $c$-star is a copy of $K_{1,c}$
with one vertex designated as its centre and the other $c$ vertices
as its leaves, including when $c=1$. Fix $x\in Q$ with
$k=d_T(x)>0$. If $k\leq a$, then
$a+b-\lceil k/a\rceil=a+b-1$ and
$a(a+b-k)\geq ab\geq a+b-1$. Each $y\in N_T(x)$ has
$d_T(y)\leq|Q|\leq a+b-1$, so the average of their reciprocal
degrees is at least $1/(a+b-1)$ and both asserted bounds follow.
Henceforth assume $k>a$, and let $B=N_T(x)$ and $c=a+b-k$.
Then $|B|=k$ and $1\leq c\leq b$, since
$k\leq|R|\leq a+b-1$.

\par\medskip\noindent\textbf{Claim.}\enspace The bipartite graph between $B$ and
$Q\setminus\{x\}$ does not contain the disjoint union of $a$ copies of a
$c$-star, each centred in $B$, as a subgraph. The same assertion holds
with $Q,R$ interchanged.

\par\noindent\emph{Proof of Claim.} Suppose that such $a$ vertex disjoint
$c$-stars exist. Use their centres as $x_1,\ldots,x_a$ and $x$ as $o$.
Use the remaining $k-a=b-c$ vertices of $B$ as
$y_1,\ldots,y_{b-c}$, and choose distinct
$y_{b-c+1},\ldots,y_b\in N(Q)\setminus(Q\cup R)$.
For each $i\in[a]$, use the $c$ leaves of the star centred at $x_i$
as $z_{i,b-c+1},\ldots,z_{ib}$.
All these cell vertices are distinct and belong to $Q\setminus\{x\}$.
For $b-c<j\leq b$, the edge $x_i z_{ij}$ belongs to the chosen
star, and $y_j z_{ij}\in E(G)$ because $y_j\in N(Q)$.

For $i\in[a]$ and $1\leq j\leq b-c$, choose distinct
$z_{ij}\in N(R)\setminus(Q\cup R)$, different also from all chosen
column vertices. Since $x_i,y_j\in B\subseteq R$, both
$x_i z_{ij}$ and $y_j z_{ij}$ are edges of $G$.
These choices can be made greedily: at each step, $Q\cup R$
and the previously chosen vertices forbid only $O_{a,b}(1)$ vertices,
so the common neighbourhoods suffice when $C_2$ is sufficiently large. The central vertex $x$ is adjacent
to every row vertex and every column vertex in $B$, since they
belong to $B=N_T(x)$; it is adjacent to the other column vertices
because they belong to $N(Q)$. Thus the chosen vertices define an injective map from $F_{a,b}$ to $G$
that preserves adjacency, contrary to the hypothesis. The same
argument applies with $Q,R$ interchanged. This proves the claim.

List the numbers $|N_T(y)\setminus\{x\}|$, for $y\in B$,
in nonincreasing order as $d_1\geq\cdots\geq d_k$. If
$d_j\geq c(a-j+1)$ for every $1\leq j\leq a$, choose the
vertices with degrees $d_a,d_{a-1},\ldots,d_1$ successively as
centres of $c$-stars. Before choosing the $c$ leaves for the
vertex with degree $d_j$, at most $c(a-j)$ leaves have already
been chosen, so this vertex has at least $c$ neighbours not
previously chosen. This gives $a$ vertex disjoint $c$-stars,
contrary to the claim. Hence, for some $0\leq i\leq a-1$,
$d_{i+1}\leq c(a-i)-1$.
Thus $d_j\leq c(a-i)-1$ for $i+1\leq j\leq k$. Since
every vertex of $B$ is adjacent to $x$, its degree in $T$ is one
more than its degree into $Q\setminus\{x\}$. Hence
\[
 \sum_{y\in B}\frac1{d_T(y)}
 =\sum_{j=1}^k\frac1{d_j+1}
 \geq\frac{k-i}{c(a-i)}
 \geq\frac{k}{ac}.
\]
The last inequality is equivalent to $i(k-a)\geq0$, proving the
bound $k^{-1}\sum_{y\in B}1/d_T(y)\geq1/[a(a+b-k)]$.

For the second bound, write $h=a+b-\lceil k/a\rceil$. If
$k>a(a+b)/(a+1)$, then
$ak-(a-1)(a+b)-k/a
=\frac{a-1}{a}((a+1)k-a(a+b))>0$.
The integer $ak-(a-1)(a+b)$ is therefore at least
$\lceil k/a\rceil$. As
$h-a(a+b-k)=ak-(a-1)(a+b)-\lceil k/a\rceil$,
we have $a(a+b-k)\leq h$. Since
$1/[a(a+b-k)]\geq1/h$, the first bound also gives the
second when $k>a(a+b)/(a+1)$. Henceforth assume
$a<k\leq a(a+b)/(a+1)$.

If every vertex of $B$ has degree at
most $h$, then each $y\in B$ has $1/d_T(y)\geq1/h$, proving
the second bound. Otherwise choose $z\in B$ with
$\ell=d_T(z)>h$.
For the remainder of this proof, redefine $c=a+b-\ell$ and let
$C=N_T(z)\subseteq Q$. Write $k-1=am+r$, where $m\geq1$ and
$0\leq r<a$. Since $\lceil k/a\rceil=m+1$ and $\ell>h$,
the integer $\ell$ satisfies $\ell\geq a+b-m$, so
$c\leq m$; also $\ell\leq|Q|\leq a+b-1$ gives $c\geq1$.
Thus $k-1\geq ac$. Moreover,
$k\leq a(a+b)/(a+1)$ gives
$a+b\geq\lceil(a+1)k/a\rceil=k+m+1$ and hence
$h=a+b-m-1\geq k$.

Apply the claim with $Q,R$ interchanged and $z$ in place of $x$.
The condition $d_T(z)>a$ holds because $\ell>h\geq k>a$.
Under this application, $B=N_T(x)$ is replaced by $C=N_T(z)$,
and each star has $c=a+b-\ell$ leaves. Thus there are no $a$
vertex disjoint $c$-stars centred in $C$ with leaves in
$R\setminus\{z\}$. The vertex $x\in C$ is adjacent to all $k-1$
vertices of $B\setminus\{z\}$. Consequently the graph between
$C\setminus\{x\}$ and $B\setminus\{z\}$ has no collection
of $a-1$ vertex disjoint $c$-stars centred in $C\setminus\{x\}$.
Indeed, $a-1$ such stars would have $(a-1)c$ leaves in
$B\setminus\{z\}$. Since $k-1\geq ac$, at least $c$ vertices of
$B\setminus\{z\}$ would remain. Together with $x$ as centre,
any $c$ of these vertices would form a further $c$-star disjoint
from the first $a-1$, contradicting the claim.

Order the vertices $w\in C\setminus\{x\}$ by the values
$|N_T(w)\cap(B\setminus\{z\})|$, in nonincreasing order.
If the $j$th value were at least $c(a-j)$ for every
$1\leq j\leq a-1$, choose these $a-1$ vertices as centres in
reverse order. At the $j$th vertex in the original ordering,
at most $c(a-1-j)$ neighbours would already have been chosen
as leaves. There would therefore be at least $c$ neighbours
available, giving $a-1$ vertex disjoint $c$-stars.
Therefore for some $0\leq i\leq a-2$ all but the first $i$ of
these degrees are at most $c(a-1-i)-1$.
The first $i$ degrees are at most $k-1$. Moreover
$|Q\setminus C|\leq a+b-1-\ell=c-1$.
The sum of the degrees in $T$ over $B\setminus\{z\}$ counts
edges to $x$, to $C\setminus\{x\}$, and to $Q\setminus C$.
These three counts are at most $k-1$,
$i(k-1)+(\ell-1-i)(c(a-1-i)-1)$, and
$(k-1)(c-1)$, respectively. Since $\ell=a+b-c$, this gives
\[
 \sum_{y\in B\setminus\{z\}}d_T(y)
 \leq M_i(c)
 :=(k-1)(c+i)+(a+b-c-1-i)\bigl(c(a-1-i)-1\bigr).
\]
For fixed $c$, $M_i(c)$ is a quadratic polynomial in $i$
with positive quadratic coefficient $c$. Since
$0\leq i\leq a-2$, $M_i(c)$ is at most the larger of
$M_0(c)=(k-1)c+(a+b-c-1)((a-1)c-1)$ and
$M_{a-2}(c)=(k-1)(c+a-2)+(b-c+1)(c-1)$.
Both $M_0(c)$ and $M_{a-2}(c)$ increase for $1\leq c\leq m$: their
derivatives are $(k-1)+(a-1)(a+b-1-2c)+1$ and
$(k-1)+b+2-2c$, respectively. The first is positive because
$a+b-1-2c\geq k-m>0$; the second is at least
$(a-2)m+b+2>0$. It therefore suffices
to bound $M_0(m)$ and $M_{a-2}(m)$.

Since $a+b-m-1=h\geq k$ and $k-1=am+r$, we have
\[
 (k-2)h-M_0(m)
 =\bigl(k-1-(a-1)m\bigr)h-(k-1)m
 =(m+r)h-(k-1)m\geq rk+m>0.
\]
For $M_{a-2}(m)$, use $k-1-m>0$ and $h\geq k$ to obtain
\[
 \begin{aligned}
 (k-2)h-M_{a-2}(m)
 &=(k-1-m)h-(k-1)(m+a-2)+(a-2)(m-1)\\
 &\geq(k-1)^2-2(k-1)m+(3-a)(k-1)+(a-3)m-a+2.
 \end{aligned}
\]
As a function of $k-1\geq am$, the polynomial on the right has
derivative $2(k-1)-2m+3-a\geq a+1>0$. At $k-1=am$ it equals
$a(a-2)m(m-1)+(2a-3)m-(a-2)\geq a-1>0$.
Consequently $\sum_{y\in B\setminus\{z\}}d_T(y)\leq(k-2)h$.
For every $y\in B\setminus\{z\}$, the edge $xy$ gives
$d_T(y)\geq1$. Also $k\geq a+1\geq3$, so $(k-2)h>0$.
Cauchy--Schwarz therefore yields
\[
 \sum_{y\in B\setminus\{z\}}\frac1{d_T(y)}
 \geq\frac{(k-1)^2}{\sum_{y\in B\setminus\{z\}}d_T(y)}
 \geq\frac{(k-1)^2}{(k-2)h}
 >\frac{k}{h}.
\]
The strict inequality holds because $(k-1)^2>(k-2)k$.
Adding $1/d_T(z)>0$ and dividing by $k$ proves the second bound.
Interchanging $Q$ and $R$ proves the assertion on the other side.
\end{proof}

\subsection{Partitions of neighbourhoods}

The weighted estimate in Lemma~\ref{lem:profile-partitions} uses the same
cancellation of common neighbour counts by reciprocal weights as
Gao et al.\ \cite[Claim~4.4 and Lemma~5.15]{GeometricShapes}.

\begin{lemma}[Neighbourhood partitions]\label{lem:profile-partitions}
Fix integers $2\leq a\leq b$ and recall $X=a+b-1$.
For every $\varepsilon,t>0$, there is an integer
$n_0=n_0(a,b,\varepsilon,t)$
such that, for every $n\geq n_0$ and every graph $G$ on $n$ vertices
that does not contain
$F_{a,b}$ as a subgraph, there exist a
spanning subgraph $G_0\subseteq G$ and a family
$(\Pi_u)_{u\in V(G_0)}$ such that each $\Pi_u$ is a partition of
$N_{G_0}(u)$. For $v\in N_{G_0}(u)$, let $\Pi_u(v)$ denote the unique part of
$\Pi_u$ containing $v$. These choices satisfy:
\begin{enumerate}
\item $e(G)-e(G_0)\leq\varepsilon n^{3/2}$;
\item every part of every $\Pi_u$ has size at most $X$, and every
      nonsingleton part $B$ is independent in $G_0$ and satisfies
      $|N_{G_0}(B)|\geq t$;
\item the following bound holds:
      \[
      \sum_{v:d_{G_0}(v)>0}
      \sum_{u,w\in N_{G_0}(v)}
      \frac1{|\Pi_u(v)\cap\Pi_w(v)|}
      \leq(1+\varepsilon)n^2.
      \]
\end{enumerate}
\end{lemma}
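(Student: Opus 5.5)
The plan is to make the weight in (iii) cancel the codegree pair by pair. Exchanging the order of summation gives
\[
 \sum_{v}\sum_{u,w\in N_{G_0}(v)}\frac1{|\Pi_u(v)\cap\Pi_w(v)|}
 =\sum_{(u,w)}\;\sum_{v\in Q_{G_0}(u,w)}\frac1{|\Pi_u(v)\cap\Pi_w(v)|},
\]
where the outer sum is over ordered pairs. The diagonal pairs $u=w$ contribute at most $\sum_v d(v)=2e(G)=o(n^2)$ by Lemma~\ref{lem:counting}(i). Pairs with $q(u,w)>X$ contribute at most $\sum_{q(u,w)>X}2q(u,w)=o(n^2)$, also by Lemma~\ref{lem:counting}(i). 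The goal is therefore to construct $G_0$ and the partitions so that, for all but $o(n^2)$ of the remaining ordered pairs, the common neighbourhood $Q_{G_0}(u,w)$ lies inside a single part of $\Pi_u$ and inside a single part of $\Pi_w$. For such a pair, $\Pi_u(v)\cap\Pi_w(v)\supseteq Q_{G_0}(u,w)$ for every $v\in Q_{G_0}(u,w)$, so the inner sum is at most $1$. Each exceptional pair contributes at most $X$. The total is then at most $n^2+o(n^2)$.

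\textbf{Construction.} First I delete every edge incident to a vertex of degree at most $\delta\sqrt n$; this removes at most $\delta n^{3/2}$ edges. For each remaining vertex $u$, I let $\Pi_u$ be generated by the sets $Q(u,w)$ with $2\leq q(u,w)\leq X$. Concretely, the parts are the connected components of the graph on $N(u)$ that joins $v,v'$ whenever both lie in a common such $Q(u,w)$. This choice automatically gives the absorption property above. It remains to enforce (ii): every part must have size at most $X$, be independent, and have $|N(B)|\geq t$. Whenever a part violates (ii), I split it into singletons or delete a few edges, and charge the loss either to $e(G)-e(G_0)$ or to the exceptional pairs.

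\textbf{Enforcing (ii).} Independence is inexpensive. An edge $vv'$ inside $Q(u,w)$ creates a triangle-rich configuration, and every edge $vv'$ with $q(v,v')$ bounded lies in only $O(1)$ such configurations. Deleting edges inside parts should cost $o(n^{3/2})$, again via $\sum_v d(v)^2=O(n^2)$.

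\textbf{Main obstacle: part sizes and large common neighbourhoods.} The hardest step is bounding the component sizes by $X$ and guaranteeing $|N(B)|\geq t$. Here I would apply Lemma~\ref{lem:counting}(ii) with $\eta$ small in terms of $\varepsilon$. For a vertex $u$ of degree $d\geq\delta\sqrt n$, the graph $L$ of pairs covered by low-codegree sets $Q(u,z)$ has edge density at most $(C_1(\eta+1/d))^{1/a}$, which is small. Only a small fraction of the $d^2$ ordered pairs $(u,w)$, weighted by $q$, can therefore lie in components that are abnormally large. Splitting those components into singletons costs at most $X\cdot o(d^2)$ summed over $u$, which is $o(n^2)$.

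For a part $B$ with $2\leq|B|\leq X$, the vertices $u,w$ already lie in $N(B)$. To reach $t$ common neighbours, I would argue in the style of the Claim in Lemma~\ref{lem:counting}(i), using the Ramsey and canonization results of Lemma~\ref{lem:canonical-ramsey}. If many pairs $(u,w)$ have $Q(u,w)$ equal to small sets $S$ with $|N(S)|<t$, then the number of such $(u,w)$ per $S$ is below $t^2$. Summing should yield $o(n^2)$ once $\delta$ is small and $n\geq n_0$. I am least sure of this last counting step, and I expect it to be where the real work lies.
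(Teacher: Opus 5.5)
\textbf{There is a genuine gap: the construction of the partitions.} Your bookkeeping is sound and is the same cancellation the paper uses. After exchanging the sums, a pair $(u,w)$ with $q(u,w)\leq X$ costs at most $1$ once $Q(u,w)\subseteq\Pi_u(v)\cap\Pi_w(v)$ for every $v\in Q(u,w)$, and at most $X$ otherwise. The problem is that connected components of the union of the sets $Q(u,w)$ are not robust, and you never establish property~(2) for them.
\begin{itemize}
\item \emph{Component sizes.} Lemma~\ref{lem:counting}(ii) bounds the \emph{number of edges} of $L_u$, not the sizes of its components, and a graph on $N(u)$ with $O(d(u))$ edges can be connected. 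A single stray vertex in one $Q(u,w)$ glues two unrelated codegree sets. For instance, in a polarity blowup, an extra neighbour of $w$ lying in a second fibre of $N(u)$ does this. Such gluings chain: making all of $N(u)$ one component needs only about $d(u)$ glued pairs per $u$, so $O(n^{3/2})$ pairs in total. That is far below anything (ii) can exclude. If it happens, splitting into singletons makes the sum in~(3) equal to $\sum_v d(v)^2$, which is about $Dn^2$ for the blowup rather than $(1+\varepsilon)n^2$.
\item \emph{Wrong objects counted.} Your exceptional objects are pairs $(u,w)$ with $w$ at distance two from $u$. These are not edges of $L_u$, so the density of $L_u$ does not count them.
\item \emph{Large common neighbourhoods.} Saying there are fewer than $t^2$ pairs per set $S$ gives nothing, because the number of sets $S$ is unbounded.
\item \emph{Independence.} A component can contain an edge between vertices that never lie in a common $Q(u,w)$. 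Also, an edge whose codegree lies between a constant and $\sqrt n$ lies in many configurations, so it is not cheap to charge.
\end{itemize}

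\textbf{The missing idea: define parts from the middle vertex by a codegree threshold.} This makes parts insensitive to stray vertices. The paper proceeds as follows.
\begin{itemize}
\item It first deletes edges at vertices of degree above $K\sqrt n$, and edges of codegree at least $\eta_J\sqrt n$. Both deletions are cheap by Lemma~\ref{lem:counting}(i).
\item It then takes $Q_j(v,u)=\{v\}\cup\{z\in N(u):q(v,z)\geq\eta_j\sqrt n\}$ as the candidate part of $\Pi_u$ containing $v$.
\item Claims~1 and~2 bound the number of high-codegree partners of $v$ and give $|Q_J(v,u)|\leq X$ off $O(n)$ oriented edges.
\item Pigeonholing over the hierarchy $\eta_0\gg\cdots\gg\eta_J$ gives an index $j$ with $Q_j=Q_{j+1}$ for almost all oriented edges.
\item For most edges, the profile class $P_v(u)=\{y\in N(v):Q_j(v,y)=Q_j(v,u)\}$ then has at least $\zeta_j\sqrt n$ members, each adjacent to all of $Q_j(v,u)$.
\end{itemize}
This one fact does three jobs. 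It gives the consistency of Claim~3, so that for $(v,u)\in\mathcal E_*$ the sets $Q_j(v,u)$, padded with singletons, form a partition of $N(u)$ with $\Pi_u(v)=Q_j(v,u)$. It gives $|N(B)|\geq\zeta_j\sqrt n\geq t$. It gives independence, because any two vertices of a part have codegree above the deletion threshold.

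With these parts, for $(v,u),(v,w)\in\mathcal E_*$ one has $\Pi_u(v)\cap\Pi_w(v)=Q(u,w)$ unless some $z\in Q(u,w)\setminus\{v\}$ has $q(v,z)<\eta_j\sqrt n$. That means $\{u,w\}\in E(L_v)$. So Lemma~\ref{lem:counting}(ii) must be applied at the \emph{middle} vertex $v$, where its edges are exactly the exceptional outer pairs.
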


\begin{proof}
By Lemma~\ref{lem:counting}(i), fix $C_0=C_0(a,b)$ such that
$\sum_vd_H(v)^2\leq C_0n^2$ and $e(H)\leq C_0n^{3/2}$ for every
graph $H$ on sufficiently many vertices that does not contain
$F_{a,b}$ as a subgraph.
Fix also a sufficiently large constant $C_3=C_3(a,b)$.
Choose positive constants and integers $J,K$ according to
\[
\begin{aligned}
 0<1/n&\ll\eta_J\ll\zeta_{J-1}\ll\eta_{J-1}
          \ll\cdots\ll\eta_1\\
       &\ll\zeta_0\ll\eta_0\ll J^{-1}\ll K^{-1}\ll\varepsilon.
\end{aligned}
\]
The hierarchy is read from right to left: each parameter is chosen
sufficiently small in terms of those already fixed and $a,b$, and
$n$ is then taken sufficiently large.

Delete from $G$ every edge incident with a vertex of degree greater
than $K\sqrt n$, obtaining $G'$. At most
$C_0n^{3/2}/K<\varepsilon n^{3/2}/2$ edges are deleted, and
$\Delta(G')\leq K\sqrt n$.
Delete from $G'$ each edge $uw$ for which
$|N_{G'}(u)\cap N_{G'}(w)|\geq\eta_J\sqrt n$, obtaining $G_0$.
For large $n$, $\eta_J\sqrt n>X$, so Lemma~\ref{lem:counting}(i)
shows that the number deleted in this step is at most
$o(n^2)/(\eta_J\sqrt n)=o(n^{3/2})$.
Unless a graph is specified, degrees, neighbourhoods and codegrees
in the rest of this proof are taken in $G_0$. In particular,
$\Delta(G_0)\leq K\sqrt n$, and every
adjacent pair in $G_0$ has codegree less than $\eta_J\sqrt n$.

For $0\leq i\leq J$, let $H_i$ be the graph on $V(G_0)$ in which
distinct vertices $v,z$ are adjacent when $q(v,z)\geq\eta_i\sqrt n$.

\par\medskip\noindent\textbf{Claim 1.}\enspace For all sufficiently
large $n$, $d_{H_i}(v)<2XK/\eta_i$ for all $0\leq i\leq J$
and $v\in V(G_0)$.

\begingroup\renewcommand{\qedsymbol}{}
\begin{proof}[Proof of Claim 1]
Fix $i,v$ and let $R=d_{H_i}(v)$. Suppose $R\geq2XK/\eta_i$.
For each $y\in N(v)$, let
$t_y=|N_{H_i}(v)\cap N(y)|$.
Counting the pairs $(y,z)\in N(v)\times N_{H_i}(v)$ with $yz\in E(G_0)$ gives
$\sum_{y\in N(v)}t_y=\sum_{z\in N_{H_i}(v)}q(v,z)
 \geq R\eta_i\sqrt n$.
Also, for every subset of $N_{H_i}(v)$ of size $X$, its vertices together
with $v$ have at most $ab$ common neighbours: otherwise
$G_0$ contains $K_{a+b,ab+1}\supseteq F_{a,b}$.
Counting pairs consisting of $y\in N(v)$ and a subset of $N_{H_i}(v)\cap N(y)$ of size $X$ therefore gives
\[
 \sum_{y\in N(v)}\binom{t_y}{X}\leq ab\binom RX.
\]
Let $\mu=d(v)^{-1}\sum_{y\in N(v)}t_y$.
Since $d(v)\leq K\sqrt n$, we have
$\mu\geq R\eta_i/K\geq2X$. For every nonnegative integer $h$,
$\binom hX\geq ((h-X+1)_+)^X/X!$, where $s_+=\max\{s,0\}$.
Convexity and $\mu-X+1\geq\mu/2$ therefore give
\[
 \sum_{y\in N(v)}\binom{t_y}{X}
 \geq\frac{d(v)}{X!}\left(\frac{\mu}{2}\right)^X
 \geq\frac{d(v)}{X!}
       \left(\frac{R\eta_i}{2K}\right)^X.
\]
Combining the last two bounds and $\binom RX\leq R^X/X!$
yields $d(v)\leq ab(2K/\eta_i)^X$, a constant independent of
$n$. Yet $R>0$ implies $d(v)\geq q(v,z)\geq\eta_i\sqrt n$ for
some $z\in N_{H_i}(v)$, a contradiction when $n$ is large.
This proves Claim 1.
\end{proof}
\endgroup

For an oriented edge $(v,u)$ and $0\leq i\leq J$, define
$Q_i(v,u)=\{v\}\cup(N(u)\cap N_{H_i}(v))$.
These sets are nested in $i$.
Let $\mathcal E_{\mathrm{big}}$ consist of the oriented edges
$(v,u)$ with $|Q_J(v,u)|>X$.

\par\medskip\noindent\textbf{Claim 2.}\enspace
$|\mathcal E_{\mathrm{big}}|=O(n)$.

\begingroup\renewcommand{\qedsymbol}{}
\begin{proof}[Proof of Claim 2]
Fix $v$. If $|Q_J(v,u)|>X$, then $u$ is adjacent to $v$ and
to at least $X$ distinct vertices of $N_{H_J}(v)$.
By Claim 1, there are $O_{a,b,K,\eta_J}(1)$ choices of $X$
vertices from $N_{H_J}(v)$. Together with $v$, each such choice
has at most $ab$ common neighbours; otherwise $G_0$ contains
$K_{a+b,ab+1}\supseteq F_{a,b}$.
Thus there are $O(1)$ choices of $u$ for each $v$, proving the claim.
\end{proof}
\endgroup

For $(v,u)\notin\mathcal E_{\mathrm{big}}$,
$Q_0(v,u)$ contains $v$, and each strict inclusion
$Q_i(v,u)\subsetneq Q_{i+1}(v,u)$ increases cardinality by at
least one. Since $|Q_J(v,u)|\leq X$, there are at most $X-1$
indices $i\in\{0,\ldots,J-1\}$ with strict inclusion.
Summing over the $2e(G_0)$ oriented edges and averaging over
the $J$ values of $i$, fix $0\leq j<J$ such that at most
$2(X-1)e(G_0)/J$ oriented edges outside
$\mathcal E_{\mathrm{big}}$ satisfy
$Q_j(v,u)\ne Q_{j+1}(v,u)$.

Fix this $j$ for the rest of the proof. For $v\in V(G_0)$
and $u\in N(v)$, define
$P_v(u)=\{y\in N(v):Q_j(v,y)=Q_j(v,u)\}$.
For fixed $v$, the distinct sets $P_v(u)$ partition $N(v)$.
By Claim 1, there are $O_{a,b,K,\eta_j}(1)$ possible values of
$Q_j(v,u)$, and hence as many parts $P_v(u)$ for each $v$.
By the choice of $\zeta_j$, the parts of size less than
$\zeta_j\sqrt n$ contain at most $\sqrt n/J$ vertices in total.
Thus at most $n^{3/2}/J$ oriented edges $(v,u)$ satisfy
$|P_v(u)|<\zeta_j\sqrt n$.

Define $\mathcal E_*$ to consist of the oriented edges $(v,u)$ for which
$(z,u)\notin\mathcal E_{\mathrm{big}}$,
$Q_j(z,u)=Q_{j+1}(z,u)$ and $|P_z(u)|\geq\zeta_j\sqrt n$
hold for every $z\in Q_j(v,u)$.

\par\medskip\noindent\textbf{Claim 3.}\enspace At most
$(C_3/J)n^{3/2}+O(n)$ oriented edges lie outside
$\mathcal E_*$. Moreover, if $(v,u)\in\mathcal E_*$, then
$Q_j(z,u)=Q_j(v,u)$ for every $z\in Q_j(v,u)$.

\begingroup\renewcommand{\qedsymbol}{}
\begin{proof}[Proof of Claim 3]
For oriented edges
$(v,u)$ and $(z,u)$, symmetry of $H_j$ gives
$z\in Q_j(v,u)$ if and only if $v\in Q_j(z,u)$; this includes
$z=v$. Thus, for fixed $(z,u)$, exactly $|Q_j(z,u)|$
oriented edges $(v,u)$ satisfy $z\in Q_j(v,u)$.
This number is at most $X$ if
$(z,u)\notin\mathcal E_{\mathrm{big}}$; in all cases it is at
most $1+d_{H_j}(z)=O(1)$ by Claim 1.
For every $(v,u)\notin\mathcal E_*$, there is
$z\in Q_j(v,u)$ such that $(z,u)\in\mathcal E_{\mathrm{big}}$,
or $Q_j(z,u)\ne Q_{j+1}(z,u)$, or
$|P_z(u)|<\zeta_j\sqrt n$. Outside $\mathcal E_{\mathrm{big}}$, at most
$2(X-1)e(G_0)/J+n^{3/2}/J$ oriented edges $(z,u)$ satisfy
the second or third condition. Each accounts for at most $X$
choices of $(v,u)$, while edges in $\mathcal E_{\mathrm{big}}$
contribute $O(n)$ choices by Claims 1 and 2.
Using $e(G_0)\leq C_0n^{3/2}$, the total is at most
$(C_3/J)n^{3/2}+O(n)$ when $C_3=C_3(a,b)$ is
sufficiently large.

Fix $(v,u)\in\mathcal E_*$, write $Q=Q_j(v,u)$, and take $z\in Q$.
Since $v,z\in Q$, the definition of $\mathcal E_*$ gives
$|P_p(u)|\geq\zeta_j\sqrt n$ and
$Q_j(p,u)=Q_{j+1}(p,u)$ for each $p\in\{v,z\}$. Every $y\in P_v(u)$ is adjacent
to every vertex of $Q$, since
$Q_j(v,y)=Q\subseteq N(y)$. Therefore any two distinct
vertices of $Q$ have at least $\zeta_j\sqrt n$
common neighbours. For every $p\in Q\setminus\{z\}$, we have
$q(p,z)\geq\zeta_j\sqrt n\geq\eta_{j+1}\sqrt n$ and $p\in N(u)$,
so $p\in Q_{j+1}(z,u)=Q_j(z,u)$. Since $z\in Q_j(z,u)$ as well,
we obtain $Q\subseteq Q_j(z,u)$.
To prove $Q_j(z,u)\subseteq Q$, first note that
$v\in Q_j(z,u)$ because $z\in Q_j(v,u)$ and $H_j$ is undirected.
Every $y\in P_z(u)$ is adjacent to every vertex of
$Q_j(z,u)$, since
$Q_j(z,y)=Q_j(z,u)\subseteq N(y)$.
Thus each vertex of $Q_j(z,u)$ other than $v$ has at least
$\zeta_j\sqrt n$ common neighbours with $v$, and belongs to
$Q_{j+1}(v,u)=Q$. This proves the reverse inclusion and
hence $Q_j(z,u)=Q$ for every $z\in Q$, completing Claim 3.
\end{proof}
\endgroup

We use Claim 3 to construct the partitions. For a fixed $u$,
if $(v,u),(w,u)\in\mathcal E_*$ and
$z\in Q_j(v,u)\cap Q_j(w,u)$, Claim 3 gives
$Q_j(v,u)=Q_j(z,u)=Q_j(w,u)$.
Thus any two distinct sets among
$\{Q_j(v,u):(v,u)\in\mathcal E_*\}$ are disjoint.
Use these sets as parts of $\Pi_u$, and use $\{w\}$ as a part
for each $w\in N(u)$ not belonging to any of them. For every
$(v,u)\in\mathcal E_*$, this gives
$\Pi_u(v)=Q_j(v,u)$.
Each such set $Q=Q_j(v,u)$ has $|Q|\leq X$, because
$(v,u)\notin\mathcal E_{\mathrm{big}}$ (take $z=v$ in the
definition of $\mathcal E_*$). For every $y\in P_v(u)$,
$Q_j(v,y)=Q\subseteq N(y)$, so $P_v(u)\subseteq N(Q)$ and
$|N(Q)|\geq\zeta_j\sqrt n$.
Every two distinct vertices of $Q$ have at least
$\zeta_j\sqrt n$
common neighbours, as shown in the proof of Claim 3.
The hierarchy gives $\eta_J\leq\eta_{j+1}<\zeta_j$.
For any distinct $p,r\in Q$, we therefore have
$q(p,r)\geq\zeta_j\sqrt n>\eta_J\sqrt n$.
The deletion from $G'$ to $G_0$ ensures that such $p,r$ are
not adjacent in $G_0$; hence $Q$ is independent.
For sufficiently large $n$, $\zeta_j\sqrt n\geq t$.
The singleton parts have size at most $X$, so part (2) follows.

It remains to prove part (3). Its sum is indexed by ordered
triples $(u,v,w)$ with $u,w\in N(v)$, including $u=w$.
If $u\ne w$, then $q(u,w)>0$ because $v\in Q(u,w)$.
We compare $1/|\Pi_u(v)\cap\Pi_w(v)|$ with $1/q(u,w)$.
The two values are equal when $(v,u),(v,w)\in\mathcal E_*$
and every $z\in Q(u,w)$ other than $v$ satisfies
$q(v,z)\geq\eta_j\sqrt n$. Indeed, under these conditions
\[
 \Pi_u(v)\cap\Pi_w(v)=Q(u,w).
\]
The left side is contained in the right side. Conversely,
$v$ belongs to both parts; any other common neighbour $z$
has $vz\in E(H_j)$ and so belongs to both
$Q_j(v,u)=\Pi_u(v)$ and $Q_j(v,w)=\Pi_w(v)$.
Consequently, if $(v,u),(v,w)\in\mathcal E_*$ and
$|\Pi_u(v)\cap\Pi_w(v)|\ne q(u,w)$, then there is a vertex $z\ne v$ with
$u,w\in Q(v,z)$ and
$q(v,z)<\eta_j\sqrt n\leq\eta_0\sqrt n$.

We first sum $1/q(u,w)$ over all ordered triples
$(u,v,w)$ with $u,w\in N(v)$ and $u\ne w$. Interchanging
the finite sums gives
\[
 \sum_v\sum_{\substack{u,w\in N(v)\\u\ne w}}\frac1{q(u,w)}
 =\sum_{\substack{u\ne w\\q(u,w)>0}}
   \ \sum_{v\in Q(u,w)}\frac1{q(u,w)}
 =|\{(u,w):u\ne w,\ q(u,w)>0\}|
 \leq n(n-1).
\]
First, Claim 3 bounds the number of oriented edges outside
$\mathcal E_*$ by $(C_3/J)n^{3/2}+O(n)$.
Each such edge occurs in at most $2K\sqrt n$ ordered triples
$(u,v,w)$ with $u\ne w$. Hence at most
$(2KC_3/J)n^2+o(n^2)$ triples have
$(v,u)\notin\mathcal E_*$ or $(v,w)\notin\mathcal E_*$.

Second, consider triples with $u\ne w$, $u,w\in Q(v,z)$ and
$q(v,z)\leq\eta_0\sqrt n$ for some $z\ne v$.
The number of ordered triples $(u,v,w)$ with
$u,w\in N(v)$ and $d(v)<\sqrt{\eta_0n}$ is at most
$\sum_{d(v)<\sqrt{\eta_0n}}d(v)^2\leq\eta_0n^2$.
This also bounds the number satisfying the additional conditions above.
For $d(v)\geq\sqrt{\eta_0n}$,
$q(v,z)\leq\eta_0\sqrt n\leq\sqrt{\eta_0}\,d(v)$.
Let $L_v$ be the graph defined in Lemma~\ref{lem:counting}(ii)
with $\eta=\sqrt{\eta_0}<1$. Each unordered pair
$\{u,w\}\subseteq Q(v,z)$ considered here is an edge of $L_v$.
Lemma~\ref{lem:counting}(ii) bounds $2e(L_v)$, and hence
the number of ordered pairs $(u,w)$ at this $v$, by
\[
 \left[C_1\left(\sqrt{\eta_0}+\frac1{d(v)}\right)
 \right]^{1/a}d(v)^2.
\]
Since $d(v)\geq\sqrt{\eta_0n}$ in this sum, we have
$1/d(v)=o(1)$ uniformly. Summing and using
$\sum_vd(v)^2\leq C_0n^2$ therefore gives a total of at most
$[C_0C_1^{1/a}\eta_0^{1/(2a)}+o(1)]n^2$.

Finally, $v\in\Pi_u(v)\cap\Pi_w(v)$ gives
$1/|\Pi_u(v)\cap\Pi_w(v)|\leq1$.
For a triple $(u,v,w)$ with $u\ne w$ not counted in either of
the preceding two estimates, we have
$1/|\Pi_u(v)\cap\Pi_w(v)|=1/q(u,w)$.
For every remaining triple with $u\ne w$, the difference
$1/|\Pi_u(v)\cap\Pi_w(v)|-1/q(u,w)$ is at most one.
The terms with $u=w$ contribute at most
$\sum_vd(v)=2e(G_0)=O(n^{3/2})$.
Combining these estimates with the displayed sum over
ordered triples $(u,v,w)$ with $u\ne w$ gives
\[
 \sum_{v:d(v)>0}\sum_{u,w\in N(v)}
 \frac1{|\Pi_u(v)\cap\Pi_w(v)|}
 \leq\left[1+\frac{2KC_3}{J}+\eta_0
       +C_0C_1^{1/a}\eta_0^{1/(2a)}+o(1)\right]n^2.
\]
By the parameter hierarchy, this is at most $(1+\varepsilon)n^2$
for sufficiently large $n$. Taking $n_0$ large enough to meet all
preceding size requirements proves part (3). The deletions
from $G$ to $G'$ and then to $G_0$ together remove at most
$\varepsilon n^{3/2}$ edges for sufficiently large $n$,
proving part (1) as well.\qedhere
\end{proof}

\subsection{Completion of the upper bounds}

\begin{proof}[Proof of Theorems~\ref{thm:general} and
\ref{thm:exceptional}: upper bounds]
For integers $1\leq s\leq X$, define
$g(s)=\max\{1/[a(a+b-s)],1/[a+b-\lceil s/a\rceil]\}$.
Both denominators are positive integers, so $0<g(s)\leq1$.

Fix $\varepsilon>0$. Let $C_2$ be the constant in
Lemma~\ref{lem:two-cores}, and apply
Lemma~\ref{lem:profile-partitions} with $t=C_2$ to a graph $G$ on $n$
vertices that does not contain $F_{a,b}$ as a subgraph, and let
$G_0$ be the resulting spanning subgraph.
All degrees and neighbourhoods in this proof refer to $G_0$.
For each vertex $v$ with $d(v)>0$, define
$I_v=d(v)^{-2}\sum_{u,w\in N(v)}|\Pi_u(v)\cap\Pi_w(v)|^{-1}$,
using the partitions supplied by Lemma~\ref{lem:profile-partitions}.
For an oriented edge $(v,u)$, define
$s(v,u)=|\Pi_u(v)|$. By Lemma~\ref{lem:profile-partitions}(2),
$s(v,u)\in\{1,\ldots,X\}$.

Fix $v$ with $d(v)>0$, a part $B\in\Pi_v$, and $w\in N(v)$.
We claim that
$|B|^{-1}\sum_{z\in B}|\Pi_w(v)\cap\Pi_z(v)|^{-1}\geq g(|B|)$.
Every intersection $\Pi_w(v)\cap\Pi_z(v)$ contains $v$.
If $|B|=1$, then $g(1)=1/X$ and
$|\Pi_w(v)\cap\Pi_z(v)|\leq X$. If $|\Pi_w(v)|=1$, then
$\Pi_w(v)=\{v\}$, so every intersection has size $1$ and
$g(|B|)\leq1$.
Suppose now that $|B|,|\Pi_w(v)|>1$. By
Lemma~\ref{lem:profile-partitions}(2), both parts are independent,
have size at most $X$, and each has at least
$C_2$ common neighbours. We have $B\cap\Pi_w(v)=\varnothing$: every vertex of $B$ is
adjacent to $v$, whereas no vertex of the independent set
$\Pi_w(v)$ is adjacent to its member $v$. Let $T$ be the bipartite graph with parts $B$ and
$\Pi_w(v)$ whose edges are the edges of $G_0$ between these sets. The vertex
$v\in\Pi_w(v)$ has degree $|B|$ in $T$, so
Lemma~\ref{lem:two-cores}, with its parts interchanged, gives
$|B|^{-1}\sum_{z\in B}d_T(z)^{-1}\geq g(|B|)$.
For each $z\in B$, the inclusion
$\Pi_w(v)\cap\Pi_z(v)\subseteq\Pi_w(v)\cap N(z)$ gives
$|\Pi_w(v)\cap\Pi_z(v)|\leq d_T(z)$.
Taking reciprocals and averaging over $z\in B$ proves the claim.

For fixed $w\in N(v)$, multiply the claim by $|B|$ and sum
over all parts $B\in\Pi_v$.
Each $z\in N(v)$ belongs to exactly one part,
$\Pi_v(z)$, whose size is $s(z,v)$. Hence
$\sum_{z\in N(v)}|\Pi_w(v)\cap\Pi_z(v)|^{-1}
\geq\sum_{z\in N(v)}g(s(z,v))$.
Summing this inequality over the $d(v)$ choices of $w$ and
dividing by $d(v)^2$ gives
$I_v\geq d(v)^{-1}\sum_{u\in N(v)}g(s(u,v))$.

For $u,w\in N(v)$,
$|\Pi_u(v)\cap\Pi_w(v)|\leq\min\{s(v,u),s(v,w)\}
\leq\sqrt{s(v,u)s(v,w)}$.
Taking reciprocals and summing over $u,w\in N(v)$ gives
$I_v\geq d(v)^{-2}(\sum_{u\in N(v)}\frac1{\sqrt{s(v,u)}})^2$ and hence
$\sqrt{I_v}\geq d(v)^{-1}\sum_{u\in N(v)}\frac1{\sqrt{s(v,u)}}$.
The lower bound for $I_v$ obtained by summing the claim,
together with concavity of the square root, gives $\sqrt{I_v}\geq
d(v)^{-1}\sum_{u\in N(v)}\sqrt{g(s(u,v))}$.
Define $Z=\sum_{v:d(v)>0}d(v)\sqrt{I_v}$, and let
$E^{\mathrm{or}}(G_0)$ denote the set of oriented edges of $G_0$.
Multiplying each lower bound for $\sqrt{I_v}$ by $d(v)$ and
summing over $v$ gives
\[
Z\geq\sum_{(v,u)\in E^{\mathrm{or}}(G_0)}\frac1{\sqrt{s(v,u)}}
\quad\text{and}\quad
Z\geq\sum_{(v,u)\in E^{\mathrm{or}}(G_0)}\sqrt{g(s(u,v))}.
\]
Reversing the orientation in the second sum and averaging gives
\[
Z\geq\frac12\sum_{(v,u)\in E^{\mathrm{or}}(G_0)}
 \left(\frac1{\sqrt{s(v,u)}}+\sqrt{g(s(v,u))}\right).
\]
By Cauchy--Schwarz,
$Z^2\leq |\{v:d(v)>0\}|\sum_{d(v)>0}d(v)^2I_v$.
We have $|\{v:d(v)>0\}|\leq n$ and, by
Lemma~\ref{lem:profile-partitions}(3) and the definition of $I_v$,
$\sum_{d(v)>0}d(v)^2I_v\leq(1+\varepsilon)n^2$.
Thus $Z\leq\sqrt{1+\varepsilon}\,n^{3/2}$.

Define $M=\frac12\min_{1\leq s\leq X}
(\frac1{\sqrt{s}}+\sqrt{g(s)})>0$, where the minimum is over
integer $s$. Since
$|E^{\mathrm{or}}(G_0)|=2e(G_0)$, the two bounds for $Z$ give
$2M e(G_0)\leq\sqrt{1+\varepsilon}\,n^{3/2}$.
By Lemma~\ref{lem:profile-partitions}(1),
$e(G)\leq[\sqrt{1+\varepsilon}/(2M)+\varepsilon]n^{3/2}$.
The number $M$ depends only on $a,b$. Letting
$n\to\infty$ and then $\varepsilon\to0$ gives
$\limsup_{n\to\infty}\ex(n,F_{a,b})/n^{3/2}\leq1/(2M)$.

It remains to give an explicit lower bound for $M$.
For $1\leq s\leq X$, define
$h(s)=1/g(s)=\min\{a(a+b-s),a+b-\lceil s/a\rceil\}$.
Write $a+b=(a+1)t+\lambda$, where $t\geq1$ is an integer and
$0\leq\lambda\leq a$. By its definition, $D=at$ if
$\lambda=0$, and $D=at+\lambda-1$ if $\lambda\geq1$.
For $s\leq D$, the function
$s-\lceil s/a\rceil$ is nondecreasing on integers, because
its value increases by $0$ or $1$ when $s$ increases by $1$.
Since $h(s)\leq a+b-\lceil s/a\rceil$, we have
$s+h(s)\leq a+b+s-\lceil s/a\rceil
\leq a+b+D-\lceil D/a\rceil$.
$a+b+D-\lceil D/a\rceil$ equals $2D+1$ when $\lambda=1$ and
$2D$ when $\lambda=0$ or $2\leq\lambda\leq a$.
For $s\geq D+1$, the inequality $h(s)\leq a(a+b-s)$ gives
$s+h(s)\leq a(a+b)-(a-1)s
\leq a(a+b)-(a-1)(D+1)$.
$a(a+b)-(a-1)(D+1)$ equals $2D-a+1$ when $\lambda=0$,
$2D+1$ when $\lambda=1$, and $2D-\lambda+2$ when
$2\leq\lambda\leq a$. Combining the two ranges of $s$,
we obtain $s+h(s)\leq2D$ if $\lambda\ne1$, and
$s+h(s)\leq2D+1$ if $\lambda=1$.
Since $x\mapsto x^{-1/2}$ is convex and decreasing, when
$\lambda\ne1$ we obtain
\[
 \frac1{\sqrt{s}}+\frac1{\sqrt{h(s)}}\geq\frac2{\sqrt D}.
\]
Here $h(D)=D$, so equality is attained at $s=D$.
When $\lambda=1$, both $s$ and $h(s)$ are positive integers.
Among positive integers $p,q$ with $p+q\leq2D+1$, the minimum
of $p^{-1/2}+q^{-1/2}$ is attained at $p=D$, $q=D+1$.
Indeed, increasing $p$ or $q$ decreases this sum, so a minimizing
pair has $p+q=2D+1$. For a fixed sum, convexity of $x^{-1/2}$
implies that the minimum occurs when $|p-q|\leq1$. Hence
\[
 \frac1{\sqrt{s}}+\frac1{\sqrt{h(s)}}\geq
 \frac1{\sqrt D}+\frac1{\sqrt{D+1}}.
\]
Moreover, $h(D)=D+1$ and $h(D+1)=D$, and both $D$ and $D+1$
belong to $\{1,\ldots,X\}$. Thus the minimum defining $M$ is attained at $s=D$ or $s=D+1$, and
\[
 \frac1{2M}=
 \begin{cases}
  \sqrt D/2,&\lambda\ne1,\\[2pt]
  \displaystyle\frac{\sqrt{D(D+1)}}{\sqrt D+\sqrt{D+1}},&\lambda=1.
 \end{cases}
\]
Strict convexity gives
$1/\sqrt D+1/\sqrt{D+1}>2/\sqrt{D+1/2}$, and therefore
$1/(2M)<\sqrt{D+1/2}/2$ when $\lambda=1$.

When $\lambda=1$, the identity $a+b-1=(a+1)t$ gives
$D=at=a(a+b-1)/(a+1)$.
Since $a+b\equiv b-1\pmod{a+1}$, the condition $\lambda=1$
is equivalent to $b\equiv2\pmod{a+1}$.

Combining these upper bounds with the lower bounds in Section~2
proves Theorems~\ref{thm:general} and~\ref{thm:exceptional}.
\end{proof}

\section*{Declaration of generative AI and AI-assisted technologies in the manuscript preparation process}

During the preparation of this work, the authors used OpenAI Codex to assist with the development of some mathematical proofs. The authors reviewed the AI-assisted material and take full responsibility for the correctness of the proofs and the content of the article.
\end{document}